\documentclass[12pt]{article}
\usepackage{amsmath,amsfonts,amsthm,amscd,upref,amstext}
\usepackage[dvips]{graphicx}
\usepackage{subfigure}
\usepackage{pb-diagram,pb-xy}
\usepackage[all]{xy}

\newtheorem{prop}{Proposition}[section]
\newtheorem{thm}[prop]{Theorem}
\newtheorem{cor}[prop]{Corollary}

\newtheorem{defn}[prop]{Definition}
\newtheorem{rem}[prop]{Remark}
\newtheorem{lem}[prop]{Lemma}

\newcommand{\N}{\mathbb{N}}

\numberwithin{equation}{section}

\begin{document}

\title{Mixed multiplicities of arbitrary  modules}
\author{R. Callejas-Bedregal$^{1,}\,$\thanks{Partially supported by CAPES-Brazil Grant Procad-190/2007, CNPq-Brazil Grant 620108/2008-8 and by
FAPESP-Brazil Grant 2010/03525-9. 2000 Mathematics Subject
Classification: 13H15(primary). {\it Key words}: Mixed
multiplicities, (FC)-sequences, Buchsbaum-Rim multiplicity.
}\,\,\,\,and\,\,\,V. H. Jorge P\'erez$^{2}$
\thanks{Work partially supported by CNPq-Brazil - Grant
309033/2009-8, Procad-190/2007,  FAPESP Grant 09/53664-8.}}

\date{}
\maketitle

\noindent $^1$ Universidade Federal da Para\'\i ba-DM, 58.051-900,
Jo\~ao Pessoa, PB, Brazil ({\it e-mail: roberto@mat.ufpb.br}).
\vspace{0.3cm}

\noindent $^2$ Universidade de S{\~a}o Paulo -
ICMC, Caixa Postal 668, 13560-970, S{\~a}o Carlos-SP, Brazil ({\it
e-mail: vhjperez@icmc.usp.br}).

\vspace{0.3cm}
\begin{abstract}

Let $(R, \mathfrak m)$ be a Noetherian local ring. In this work we
extend the notion of mixed multiplicities of modules, given in
\cite{Kleiman-Thorup2} and \cite{Kirby-Rees1} (see also
\cite{Bedregal-Perez}),  to an arbitrary family  $E,E_1,\ldots,
E_q$ of $R$-submodules of $R^p$ with $E$ of finite colength. We
prove that these mixed multiplicities coincide with the
Buchsbaum-Rim multiplicity of some suitable $R$-module. In
particular, we recover the fundamental Rees's mixed multiplicity
theorem  for modules, which was proved  first by Kirby and Rees in
\cite{Kirby-Rees1} and recently also proved by the authors in
\cite{Bedregal-Perez}. Our work is based on, and extend to this
new context, the results on mixed multiplicities of ideals
obtained by Vi\^et in \cite{Viet8} and Manh and Vi\^et in
\cite{Manh-Viet}. We also extend to this new setting some of the
main results of Trung in \cite{Trung} and Trung and Verma in
\cite{Trung-Verma1}. As in \cite{Kleiman-Thorup2},
\cite{Kirby-Rees1} and \cite{Bedregal-Perez}, we actually work in
the more general context of standard graded $R$-algebras.
\end{abstract}

\maketitle
\section{Introduction}

The theory of mixed multiplicities of finitely many
zero-dimensional ideals goes back to the work of Risler and
Teissier in \cite{Teissier} where they use these mixed
multiplicities to study the Whitney equisingulariy of families of
hypersurfaces with isolated singularities. Risler and Teissier
({\it loc. cit.}) also proved that each mixed multiplicity could
be described as the usual Hilbert-Samuel multiplicity of the ideal
generated by an appropriated superficial sequence. This result of
Risler and Teissier was later generalized by Rees in \cite{Rees}
who proved that the mixed multiplicities of a family of
zero-dimensional ideals could be described as the Hilbert-Samuel
multiplicity of the ideal generated by a suitable joint reduction.
This Theorem of Rees is known as Rees's mixed multiplicity theorem
and it is a crucial result in the theory of mixed multiplicities
for zero-dimensional ideals.

In order to extend the above results to the case where not all the
ideals are zero-dimensional, Vi\^et introduced in \cite{Viet8} the
notion of (FC)-sequences and showed that mixed multiplicities of a
family of arbitrary ideals could be described as the
Hilbert-Samuel multiplicity of the ideal generated by a suitable
(FC)-sequence (see also \cite{Manh-Viet}, \cite{Viet11},
\cite{Manh-Viet2}, \cite{Viet10}, \cite{Viet9}). Similar results
were also obtained by Trung in \cite{Trung} for a couple of
ideals, and later generalized for finitely many ideals by Trung
and Verma  in \cite{Trung-Verma1}, by using a stronger notion of
general sequences than that of (FC)-sequence (see \cite{Twoviet}).
Trung and Verma ({\it loc. cit.}) used their results on mixed
multiplicities of ideals to interpret mixed volume of lattice
polytopes as mixed multiplicities of ideals and also to give a
purely algebraic proof of Bernstein's theorem. In general, mixed
multiplicities  have been also mentioned  in the works  of Verma,
Katz, Swanson and other authors (see e.g. \cite{Verma1},
\cite{Verma2}, \cite{Verma3},
\cite{Herrmann-Hyry-Ribbe-Tang},\cite{Katz-Verma},
\cite{Bedregal-Perez1}).

The notion of mixed multiplicities for a family $E_1,\ldots, E_q$
of $R$-submodules of $R^p$ of finite colength, where $R$ is a
local Noetherian ring, have been described in a purely algebraic
form by Kirby and Rees in \cite{Kirby-Rees1} and in an
algebro-geometric form by Kleiman and Thorup in
\cite{Kleiman-Thorup} and \cite{Kleiman-Thorup2}. The main result
of Risler and Teissier in \cite{Teissier} was generalized for
modules in \cite{Bedregal-Perez} and the main result of Rees in
\cite{Rees} was generalized for modules in \cite{Kirby-Rees1} and
\cite{Bedregal-Perez}, where the mixed multiplicities for
$E_1,\ldots, E_q$ are described as the Buchsbaum-Rim multiplicity
of a module generated by a suitable superficial  sequence and
joint reduction of $E_1,\ldots, E_q,$ respectively.

Let $(R, \mathfrak m)$ be a Noetherian local ring. In this work we
extend the notion of mixed multiplicities to an arbitrary family
$E,E_1,\ldots, E_q$ of $R$-submodules of $R^p$ with $E$ of finite
colength in $R^p$ and prove that these mixed multiplicities
coincide with the Buchsbaum-Rim multiplicity of some $R$-modules.
In particular, we recover the fundamental Rees's mixed
multiplicity theorem  for modules, which was proved  first by
Kirby and Rees in \cite{Kirby-Rees1} and recently also proved by
the authors in \cite{Bedregal-Perez}.
 Our work is based on, and
extend to this new context, the results on mixed multiplicities of
ideals obtained by Vi\^et in \cite{Viet8} and Manh and Vi\^et in
\cite{Manh-Viet}. We also extend to this new setting some of the
main results of Trung in \cite{Trung} and Trung and Verma in
\cite{Trung-Verma1}. In fact, we do this in the context of
standard graded algebras.

Fix a graded $R$-algebra  $G=\oplus G_n,$ that, as usual, is
generated as algebra by finitely many elements of degree one and
$M$  a finitely generated graded $G$-module. In Section 2, we
introduce the notion of (FC)-sequences of $R$-submodules of $G_1$
with respect to $M.$ In Section 3, we recall the concept of
Buchsbaum-Rim multiplicities of $R$-submodules of $G_1$  with
respect to $M,$ as defined in \cite{Kleiman-Thorup2} and
\cite{Kirby-Rees1}. In Section 4, we introduce the notion of mixed
multiplicities of a family $J, I_1,\dots, I_q$ of arbitrary
$R$-submodules of $G_1,$with $J$ of finite colength in $G_1$, and
we link these mixed multiplicities and Buchsbaum-Rim
multiplicities via (FC)-sequences of these $R$-submodules of
$G_1.$  The main results of this section are Theorem \ref{prop1},
Theorem \ref{Teo1} and Theorem \ref{Teo4}. In Section 5, we apply
the results on mixed multiplicities of Section 4 to arbitrary
modules (Theorem \ref{mod1} and Theorem \ref{mod2}). In
particular, we get in Theorem \ref{mod3} interesting results
similar to that of Kirby and Rees in \cite{Kirby-Rees1} and the
authors in \cite{Bedregal-Perez}  but in terms of (FC)-sequences.

It is worth noting that, even though D. Q. Viet and N. T. Manh in
\cite{Manh-Viet4} and D. Q. Viet and T. T. H. Thanh in
\cite{Viet-Thanh}, develop the theory of mixed multiplicities of
multigraded  modules over a finitely generated standard
multigraded algebras over an Artinian local ring, their approach
do not apply to obtain our main results because the multigraded
modules we use to define mixed multiplicities, of finitely many
modules, are modules over a finitely generated standard
multigraded algebra over a Noetherian local ring $(R, \mathfrak
m)$, whose support over $R$ is finite, and not over an Artinian
local ring.

\section{FC-sequences}

{\bf Setup (1):} Fix $(R, \mathfrak m)$ an arbitrary Noetherian
local ring; fix a graded $R$-algebra  $G=\oplus G_n,$ that, as
usual, is generated as algebra by finitely many elements of degree
one; fix $J$ a finitely generated $R$-submodule of $G_1$ such that
$\ell(G_1/J)<\infty;$ fix $I_1,\dots, I_q$ with $I_i\subseteq
G_{1}$ finitely generated $R$-submodules; and fix $M=\oplus M_n$ a
finitely generated graded $G$-module generated in degree zero,
that is $M_n=G_nM_0$ for all $n\geq 0.$ We denote by ${\mathcal
I}$ the ideal of $G$ generated by $I_1\cdots I_q.$  Set
$N=0_M:{\mathcal I}^{\infty}$,  $M^*:=M/N,$ $\overline{M}=M/xM$
and $\overline{M}^*= \overline{M}/0_{\overline{M}}:{\mathcal
I}^{\infty}\cong M/xM:{\mathcal I}^{\infty}$

In this work, we will define the notion of mixed multiplicities of
$J,I_1,\dots, I_q$ with respect to $M$ and we will prove that
these mixed multiplicities could be described as Buchsbaum-Rim
multiplicities. In order to relate the above multiplicities we
define the notion of (FC)-sequences for a family of $R$-submodules
of $G_1.$ The results of this work will show that (FC)-sequences
carries important information on mixed multiplicities, as the one
just mentioned.

We use the following multi-index notation through the
remaining part of this work. The norm of a multi-index ${\bf
n}=(n_1,\ldots, n_k)$ is $|{\bf n}|=n_1+\cdots+n_k$ and ${\bf
n}!=n_1!\cdots n_k!.$ If ${\bf n,d}$ are two multi-index then
${\bf n}^{\bf d}=n_1^{d_1}\cdots n_k^{d_k}.$ If ${\bf
I}=(I_1,\ldots,I_k)$ is a $k$-tuple of $R$-submodules of $G_1$
then ${\bf I}^{\bf n}:= I_1^{n_1}\cdots I_k^{n_k}.$  We also use the
following notation, $\delta(i)=(\delta(i,1),\ldots, \delta(i,k)),$
where $\delta(i,j)=1$ if $i=j$ and $0$ otherwise.

\begin{defn}\label{FC}
Let $I_1,\dots,I_q$ be $R$-submodules of $G_1$.  Assume that
$\mathcal I$ is not contained in $\sqrt{\mbox{Ann}M}.$ We say that
an element $x\in G_1$  is an (FC)-element with respect to
$(I_1,...,I_q; M)$ if there exists
 an $R$-submodule $I_i$ of $G_1$ and an integer $r'_i$ such that
\begin{enumerate}
\item [$(FC_1)$] $x\in I_i\setminus {\mathfrak m}I_i$  and
$${\bf I}^{\bf r}M_p\cap xM_{|{\bold r}|+p-1}=x{\bf I}^{{\bf r}-\delta(i)}M_p$$ \noindent for all ${\bold r}\in \N^q$ with
$r_i\geq r'_i.$

\item [$(FC_2)$] $x$ is a filter-regular element with respect to $({\cal I}; M),$ i.e., $0_{M}:x\subseteq 0_{M}:{\mathcal I}^{\infty}.$

\item [$(FC_3)$] $\dim(\mbox{Supp}(M/xM:{\mathcal I}^{\infty}))=\dim(\mbox{Supp}(M^*))-1.$
\end{enumerate}

 \noindent We call  $x\in G_1$ a weak-(FC)-element with respect to $(I_1,...,I_q; M)$ if $x$ satisfies the conditions $(FC_1)$ and $(FC_2).$

 A sequence of elements
$x_1,\ldots, x_k$ of $G_1$,  is said to be an (FC)-sequence with
respect to $(I_1,...,I_q; M)$ if ${\overline x}_{i+1}$ is an
(FC)-element with respect to {\break}
$({\overline{I}}_1,...,{\overline{I}}_q; \overline{M})$ for each
$i=1,...,q-1$, where $\overline{M}=M/(x_1,...,x_i)M$,  ${\overline
x}_{i+1}$ is the initial form of $x_{i+1}$ in
$\overline{G}=G/(x_1,\dots,x_i)$  and ${\overline
I}_i=I_i\overline{G}$, $i=1,\dots, q.$

A sequence of elements $x_1,\ldots, x_k$ of $G_1$,  is said to be
a weak-(FC)-sequence with respect to $(I_1,...,I_q; M)$ if
${\overline x}_{i+1}$ is a weak-(FC)-element with respect to
$({\overline{I}}_1,...,{\overline{I}}_q; \overline{M})$ for each
$i=1,...,q-1$.
\end{defn}

The following result is crucial  for showing the existence of
weak-(FC)-sequences.

\begin{lem}\label{Rees Lemma}(\rm{Generalized Rees'Lemma}) In the setup $(1)$,
let $\Sigma$ be a finite set of prime ideals not containing
$\mathcal I.$ Then for each $i=1,\ldots, q,$ there exists an
element $x\in I_i\setminus {\mathfrak m}I_i,$ $x$ not contained in
any prime ideal in $\Sigma,$ and a positive integer $k_i$ such
that for all $r_i\geq k_i$ and all non-negative integers
$r_1,\dots,r_{i-1},r_{i+1},\dots,r_q$
$$I_1^{r_1}\cdots I_i^{r_i}\cdots I_q^{r_q}M_p\cap xM_{|\bold r|+p-1}=xI_1^{r_1}\cdots I_i^{r_i-1}\cdots I_q^{r_q}M_p.$$
\end{lem}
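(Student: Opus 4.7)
The plan is to follow the standard Rees-type strategy in this multigraded module setting: construct an auxiliary multigraded module, find $x$ by a prime-avoidance argument on its finite set of associated primes, and then harvest the intersection formula for $r_i$ large.

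First, I would bundle the data into a multigraded object. Let $\mathcal{S}:=G[I_1t_1,\ldots,I_qt_q]\subseteq G[t_1,\ldots,t_q]$ be the multi-Rees algebra of $I_1,\ldots,I_q\subseteq G_1$ over $G$, a Noetherian bigraded (by $(n,{\bf r})\in\N\times\N^q$) $R$-algebra, and consider the bigraded $\mathcal{S}$-module
\[
\mathcal{T} \;:=\; \bigoplus_{{\bf r}\in\N^q}\bigl(I_1^{r_1}\cdots I_q^{r_q}\,M\bigr)\,t^{\bf r}\;\subseteq\; M[t_1,\ldots,t_q],
\]
which is finitely generated over $\mathcal{S}$ since $M$ is finitely generated over $G$. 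The inclusion ``$\supseteq$'' in the target identity is immediate; the real content is the reverse inclusion.

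Second, I would isolate the $\mathcal{I}$-torsion. Set $\mathcal{N}:=0_{\mathcal{T}}:\mathcal{I}^{\infty}$ and $\mathcal{T}^{*}:=\mathcal{T}/\mathcal{N}$. Then $\mathrm{Ass}_{\mathcal{S}}(\mathcal{T}^{*})$ is a finite set, and by construction no associated prime contains $\mathcal{I}$; in particular none contains $I_i$, so each such prime meets the degree-$\delta(i)$ piece of $\mathcal{S}$ in a proper $R$-submodule whose image in the finite-dimensional $R/\mathfrak{m}$-vector space $I_i/\mathfrak{m}I_i$ is proper. Graded prime avoidance, applied to this finite collection together with the primes in $\Sigma$ (which also miss $I_i$ modulo $\mathfrak{m}I_i$, since $\mathcal{I}\not\subset P$ for $P\in\Sigma$), produces
\[
x\in I_i\setminus\mathfrak{m}I_i,\qquad x\notin\bigcup_{P\in\Sigma}P,
\]
such that multiplication by $x$ (viewed as an element of $\mathcal{S}$ of multidegree $\delta(i)$) is injective on $\mathcal{T}^{*}$.

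Third, I would translate this injectivity into the claimed formula. Given $y=xz$ with $y\in I_1^{r_1}\cdots I_q^{r_q}M_p$ and $z\in M_{|{\bf r}|+p-1}$, injectivity of $\mu_x$ on $\mathcal{T}^{*}$, combined with the identification of the bigraded components of $\mathcal{T}$, forces the class of $z$ in $\mathcal{T}^{*}$ to lie in $I_1^{r_1}\cdots I_i^{r_i-1}\cdots I_q^{r_q}M_p\,t^{{\bf r}-\delta(i)}$, and hence $z\in I_1^{r_1}\cdots I_i^{r_i-1}\cdots I_q^{r_q}M_p+\mathcal{N}$. An Artin-Rees-type argument applied to the finitely generated submodule $\mathcal{N}\subseteq\mathcal{T}$, which is annihilated by some fixed power $\mathcal{I}^{k}$, then produces an integer $k_i$ beyond which the $\mathcal{N}$-correction is absorbed into $I_1^{r_1}\cdots I_i^{r_i-1}\cdots I_q^{r_q}M_p$, uniformly in the other $r_j$. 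The main obstacle is precisely this uniform Artin-Rees step: the torsion must be bounded in terms of $r_i$ alone while $r_1,\ldots,r_{i-1},r_{i+1},\ldots,r_q$ vary freely, which is why the prime-avoidance analysis must be performed on the entire multigraded module $\mathcal{T}^{*}$ rather than grade by grade.
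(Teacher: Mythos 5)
Your overall strategy (multi-Rees algebra, finitely many associated primes of the torsion-free quotient, prime avoidance, then an Artin--Rees absorption for $r_i\ge k_i$) is the standard way the underlying result is proved in the literature, but it is not what the paper does: the paper's entire proof is a two-line reduction, letting $\mathcal I_j$ be the ideal of $G$ generated by $I_j\subseteq G_1$, citing Lemma 2.2 of Manh--Vi\^et for the ungraded equality $\mathcal I_1^{r_1}\cdots\mathcal I_q^{r_q}M\cap xM=x\mathcal I_1^{r_1}\cdots\mathcal I_i^{r_i-1}\cdots\mathcal I_q^{r_q}M$, and then extracting the degree $|\mathbf r|+p$ component (which matches the stated formula because $M$ is generated in degree zero, so $(\mathcal I_1^{r_1}\cdots\mathcal I_q^{r_q}M)_{|\mathbf r|+p}=\mathbf I^{\mathbf r}M_p$ and $(xM)_{|\mathbf r|+p}=xM_{|\mathbf r|+p-1}$). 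So you are re-deriving the cited input rather than using it, which is legitimate in principle but raises the bar for what your sketch must actually deliver.

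As written, your third step has a genuine gap. The element $z$ with $y=xz$ lies in $M_{|\mathbf r|+p-1}$, i.e.\ in the multidegree-$\mathbf 0$ piece of $\mathcal T$; it does not a priori lie in $\mathcal T_{\mathbf r-\delta(i)}=\mathbf I^{\mathbf r-\delta(i)}M$, so injectivity of multiplication by $xt_i$ on $\mathcal T^{*}$ says nothing about it. What that injectivity controls is kernels of $\mathcal T_{\mathbf r-\delta(i)}\to\mathcal T_{\mathbf r}$, whereas the content of the lemma is a statement about $(\mathbf I^{\mathbf r}M:_Mx)$, i.e.\ about elements of $M$ that are merely multiplied \emph{into} $\mathbf I^{\mathbf r}M$; bridging these requires working with $\bigoplus_{\mathbf r}(\mathbf I^{\mathbf r}M:_Mx)/(\mathbf I^{\mathbf r-\delta(i)}M+(0:_Mx))$ or with $\mathcal T/xt_i\mathcal T$, not with $\mathcal T^{*}$ alone. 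Compounding this, the step you yourself identify as ``the main obstacle'' --- obtaining the equality for all $r_j\ge 0$, $j\ne i$ (including $r_j=0$), with only $r_i$ large --- is precisely the hard content of Rees-type lemmas, and your sketch does not resolve it: a direct multigraded Artin--Rees argument produces a constant $\mathbf c$ with positive entries in \emph{every} coordinate, so it only yields the formula when all $r_j$ are large. Eliminating the dependence on the other coordinates is where the genericity of $x$ must actually be used, and that argument is missing.
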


\begin{proof} Let ${\mathcal I}_i$ be the ideal in $G$ generated
by $I_i, i=1,\dots, q.$ By \cite[Lemma 2.2]{Manh-Viet} for each
$i=1,\ldots, q,$ there exist an element $x\in {\mathcal
I}_i\setminus {\mathfrak m}{\mathcal I}_i,$ $x$ not contained in
any prime ideal in $\Sigma,$ and a positive integer $k_i$ such
that for all $r_i\geq k_i$ and all non-negative integers
$r_1,\dots,r_{i-1},r_{i+1},\dots,r_q$
$${\mathcal I}_1^{r_1}\cdots {\mathcal I}_i^{r_i}\cdots {\mathcal I}_q^{r_q}M\cap xM=x{\mathcal I}_1^{r_1}\cdots {\mathcal I}_i^{r_i-1}\cdots {\mathcal I}_q^{r_q}M.$$ \noindent Hence,
the result follows by taking degree $|{\bf r}|+p$ in the above
equality.
\end{proof}

The following result shows the existence of weak-(FC)-sequences.

\begin{prop}\label{Obs1}
If ${\mathcal I}$  is not contained in $\sqrt{\mbox{Ann}M}$ then,
for any $i=1,\ldots,q,$ there exists a weak-(FC)-element $x_i\in
I_i$
 with respect to $(I_1,...,I_q;M).$
\end{prop}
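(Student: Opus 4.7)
The plan is to apply the Generalized Rees' Lemma (Lemma~\ref{Rees Lemma}) with a set $\Sigma$ engineered so that avoidance of $\Sigma$ forces the filter-regularity condition $(FC_2)$. Fix $i\in\{1,\ldots,q\}$ and take
\[
\Sigma := \{\mathfrak{p}\in\operatorname{Ass}_G(M) : \mathcal{I}\not\subseteq\mathfrak{p}\}.
\]
Because $M$ is finitely generated as a $G$-module, $\operatorname{Ass}_G(M)$ is finite, and by construction no prime in $\Sigma$ contains $\mathcal{I}$. Hence Lemma~\ref{Rees Lemma} applies and produces $x_i\in I_i\setminus\mathfrak{m}I_i$ lying outside every prime of $\Sigma$, together with a positive integer $k_i$ such that ${\bf I}^{\bf r}M_p\cap x_iM_{|{\bf r}|+p-1}=x_i{\bf I}^{{\bf r}-\delta(i)}M_p$ for all $\mathbf{r}\in\N^q$ with $r_i\geq k_i$. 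This is precisely condition $(FC_1)$ of Definition~\ref{FC}.

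For $(FC_2)$ one needs $0_M:x_i\subseteq N:=0_M:\mathcal{I}^\infty$. I would verify this using a graded irredundant primary decomposition $0_M=\bigcap_j Q_j$, where each $Q_j$ is $\mathfrak{p}_j$-primary in $M$. A short standard argument gives
\[
N=\bigcap_{\mathfrak{p}_j\not\supseteq\mathcal{I}}Q_j,
\]
using that for $\mathfrak{p}_j\supseteq\mathcal{I}$ a high power of $\mathcal{I}$ annihilates $M/Q_j$ (since $M/Q_j$ is finitely generated with a single associated prime $\mathfrak{p}_j$), whereas for $\mathfrak{p}_j\not\supseteq\mathcal{I}$ any element of $\mathcal{I}\setminus\mathfrak{p}_j$ is a nonzerodivisor on $M/Q_j$. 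Since $x_i$ avoids every $\mathfrak{p}_j\in\Sigma$, we obtain $(Q_j:x_i)=Q_j$ for each such $j$, and therefore
\[
0_M:x_i=\bigcap_j(Q_j:x_i)\subseteq\bigcap_{\mathfrak{p}_j\not\supseteq\mathcal{I}}(Q_j:x_i)=\bigcap_{\mathfrak{p}_j\not\supseteq\mathcal{I}}Q_j=N,
\]
which is exactly $(FC_2)$.

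No substantial obstacle remains. Lemma~\ref{Rees Lemma} delivers $(FC_1)$ the instant $\Sigma$ is fixed, and $(FC_2)$ reduces to routine associated-prime avoidance through the primary decomposition of $0_M$. The hypothesis $\mathcal{I}\not\subseteq\sqrt{\operatorname{Ann}M}$ is invoked only so that Definition~\ref{FC} is meaningful; the construction above goes through even in the degenerate case $\Sigma=\emptyset$.
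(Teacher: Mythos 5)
Your proposal is correct and follows essentially the same route as the paper: both take $\Sigma$ to be the (finite, nonempty) set of associated primes of $M$ not containing $\mathcal{I}$ (the paper writes it as $\mbox{Ass}(M/0_M:\mathcal{I}^{\infty})$ and then identifies it with your set) and invoke the Generalized Rees' Lemma to get $(FC_1)$ together with prime avoidance. The only difference is that you spell out the deduction of $(FC_2)$ via a primary decomposition of $0_M$, a routine verification the paper leaves implicit; your argument there is sound.
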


\begin{proof}
Set ${\Sigma}=\mbox{Ass}(\frac{M}{0_{M}:{\mathcal I}^{\infty}}).$
Since ${\mathcal I}$  is assumed not to be contained in
$\sqrt{\mbox{Ann}M}$ we have that ${\Sigma}\neq\emptyset $. We can
easily see that ${\Sigma}$ is a finite set and also that
${\Sigma}= \{\mathfrak{p}\in \mbox{Ass}(M)| {\mathfrak p}
\not\supseteq {\mathcal I}\}.$ By Lemma \ref{Rees Lemma}, for each
$i=1,\dots, s$ there exists  an element $x_i\in I_i\setminus
{\mathfrak m}I_i$ such that  $x_i$ satisfies the condition
$(FC_1)$ and $x_i \notin {\mathfrak p}$ for all ${\mathfrak p}\in
{\Sigma}$. Thus, $x_i$ also satisfies the condition $(FC_2)$.
Hence $x_i\in I_i$ is a weak-(FC)-element with respect to
$(I_1,...,I_q;M).$
\end{proof}

Vi\^et in \cite{Viet11} introduced the concept of generalized
joint reductions of ideals in local rings, which is a
generalization of the notion of joint reduction given by Rees in
\cite{Rees}. This notion was lately extended to module
coefficients by Manh and Vi\^et in \cite{Manh-Viet}. Now, we
extend the notion of generalized joint reduction with module
coefficients to graded modules.

\begin{defn}
Let $I_1,\dots, I_q$ be a finitely generated $R$-submodules of $G_1.$ A set $({\mathfrak J}_1,\dots, {\mathfrak J}_t)$ of $R$-submodules of $G_1,$
with $\mathfrak J_i\subseteq I_i,\;i=1,\dots,t\leq q,$
is called a generalized joint reduction of $(I_1,\dots,I_q)$ with respect to
$M$ if
$${\bold I}^{{\bf r}}M_p=\Sigma_{j=1}^{t}{\mathfrak I}_j{\bold I}^{{\bf r}-\delta(j)}M_p,$$  \noindent for all large $r_1,\dots, r_q$ and all $p.$
\end{defn}

The relation between maximal weak-(FC)-sequences and generalized
joint reductions was determined in \cite[Theorem 3.4]{Viet11} and
\cite[Theorem 2.9]{Manh-Viet} in local rings. We extend this
result to graded modules as follows.

\begin{prop}\label{FC JoiRe}
 Assume that
${\mathcal I}$ is not contained in $\sqrt{\mbox{Ann}M}$ and let
 $J$ be a finitely generated $R$-submodule of $G_1$ of finite
colength. Suppose
 $${\mathfrak I}_1=(x_{11},\dots,x_{1m})\subset I_1,$$ $${\mathfrak I}_2=
(x_{21},\dots,x_{2n})\subset I_2,$$ $$\dots$$
$${\mathfrak I}_t=(x_{t1},\dots,x_{tp}) \subset I_t$$
\noindent and $x_{11},\dots,x_{1m}, x_{21},\dots,x_{2n}\dots
x_{t1},\dots,x_{tp}$ is a maximal weak-(FC)-sequence of $G$ with
respect to $(I_1,...,I_q;M)$ in ${\cup}_{i=1}^qI_i.$ Then the
following statements hold.
\begin{itemize}
\item [(i)] For any $k\leq t$, we have
$$({\mathfrak I}_1,{\mathfrak I}_2,\dots {\mathfrak I}_k)M_{p+|\bold r|-1}
\cap {\bold I}^{{\bf r}}M_p=\Sigma_{j=1}^{k}{\mathfrak I}_j{\bold I}^{{\bf r}-\delta(j)}M_p,$$ for all large ${\bold r}$ and all $p.$

\item [(ii)] ${\mathfrak I}_1,{\mathfrak I}_2,\dots {\mathfrak I}_k$ is a generalized joint reduction of $(I_1,\dots,I_q)$ with respect to $M.$

\end{itemize}
\end{prop}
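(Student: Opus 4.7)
My approach is to prove (i) by induction on the total length $s$ of the initial segment $x_{11}, x_{12}, \ldots, x_{tp}$ being considered, growing the combined $R$-submodule one element at a time. Let $\mathfrak{J}_j^{(s)}$ denote the $R$-submodule of $\mathfrak{I}_j$ spanned by those $x_{j\alpha}$ already included. The inclusion $\supseteq$ is immediate since $\mathfrak{J}_j^{(s)}\subseteq I_j$ and $I_j \mathbf{I}^{\mathbf{r}-\delta(j)} \subseteq \mathbf{I}^{\mathbf{r}}$. For the reverse inclusion I would pick $y$ in the intersection and isolate the contribution of the newly added element $x \in \mathfrak{I}_{j_0}$, writing $y = xa + y'$ with $a \in M_{p+|\mathbf{r}|-1}$ and $y' \in \bigl(\sum_j \mathfrak{J}_j^{(s-1)}\bigr)M_{p+|\mathbf{r}|-1}$. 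Passing to $\overline{M} = M/(x_1,\ldots,x_{s-1})M$, the image $\bar{x}$ is, by construction of the weak-(FC)-sequence, a weak-(FC)-element with respect to the index $j_0$. Hence, for large $r_{j_0}$, condition $(FC_1)$ forces $\bar{x}\bar{a} = \bar{x}\bar{c}$ for some $\bar{c} \in \bar{\mathbf{I}}^{\mathbf{r}-\delta(j_0)}\overline{M}_p$. Lifting to $c \in \mathbf{I}^{\mathbf{r}-\delta(j_0)}M_p$, the summand $xc$ already lies in $\mathfrak{I}_{j_0}\mathbf{I}^{\mathbf{r}-\delta(j_0)}M_p$, and the residue $y-xc$ sits in $\mathbf{I}^{\mathbf{r}}M_p \cap \bigl(\sum_j \mathfrak{J}_j^{(s-1)}\bigr)M_{p+|\mathbf{r}|-1}$, to which the inductive hypothesis applies.

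For (ii) (taking $k = t$) I will combine (i) with the maximality hypothesis. Maximality of the weak-(FC)-sequence in $\cup_{i=1}^{q} I_i$ says that no element of $\cup_i \overline{I}_i$ is a weak-(FC)-element for the quotient $\overline{M}:=M/(\mathfrak{I}_1+\cdots+\mathfrak{I}_t)M$. The contrapositive of Proposition \ref{Obs1} then forces $\overline{\mathcal{I}} \subseteq \sqrt{\mbox{Ann}(\overline{M})}$, so $\overline{\mathcal{I}}^N \overline{M}=0$ for some $N$. Once every $r_j \geq N$, this translates into
\[
\mathbf{I}^{\mathbf{r}}M_p \;\subseteq\; \Bigl(\sum_{j=1}^{t}\mathfrak{I}_j\Bigr)M_{p+|\mathbf{r}|-1}.
\]
Combined with (i) at $k=t$, the intersection collapses onto the whole of $\mathbf{I}^{\mathbf{r}}M_p$, yielding $\mathbf{I}^{\mathbf{r}}M_p = \sum_{j=1}^{t}\mathfrak{I}_j \mathbf{I}^{\mathbf{r}-\delta(j)}M_p$, which is exactly the definition of a generalized joint reduction.

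The main technical obstacle I anticipate is in the inductive step of (i), specifically the choice of index in the application of $(FC_1)$: the definition of a weak-(FC)-element only asserts the existence of \emph{some} admissible index $i$, whereas I need to invoke it precisely for the index $j_0$ of the submodule $\mathfrak{I}_{j_0}$ to which the new element belongs. This is ensured because the sequence is manufactured by iterated use of Lemma \ref{Rees Lemma}, which produces elements in $I_j \setminus \mathfrak{m}I_j$ satisfying $(FC_1)$ for the specific index $i=j$. A secondary concern is the gradation bookkeeping---namely, that the lift $c$ can be taken inside $\mathbf{I}^{\mathbf{r}-\delta(j_0)}M_p$ and not merely in some higher-degree refinement, and that the thresholds furnished by $(FC_1)$ are uniform in $p$---but both points follow from the graded structure of $G$ and $M$ together with the generation of $M$ in degree zero.
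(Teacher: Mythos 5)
Your proposal is correct, and its overall architecture coincides with the paper's: part (i) is proved by adjoining the elements of the weak-(FC)-sequence one at a time and invoking condition $(FC_1)$ in the successive quotients, and part (ii) is obtained exactly as in the paper by combining (i) with maximality and (the contrapositive of) Proposition \ref{Obs1}, which forces $\mathbf{I}^{\mathbf r}M_p\subseteq(\sum_j\mathfrak I_j)M_{p+|\mathbf r|-1}$ once every $r_j$ exceeds the nilpotency exponent of $\overline{\mathcal I}$ on $M/(\mathfrak I_1+\cdots+\mathfrak I_t)M$. The substantive difference is in how the inductive step of (i) is executed. The paper runs a double induction (outer on $k$, inner on the generators of $\mathfrak I_k$), and at each step first passes to the saturation $L=(x_{11},\dots,x_{1,i-1})M:\mathcal I^{\infty}$ (resp.\ $\mathfrak N=(\mathfrak J_1,\dots,\mathfrak J_{k-1})M:\mathcal I^{\infty}$), applies $(FC_1)$ modulo $L$, and then uses the Artin--Rees lemma twice to descend from $\mathbf I^{\mathbf r}M_p\cap L_{p+|\mathbf r|}$ back to $\mathbf I^{\mathbf r}M_p\cap(x_{11},\dots,x_{1,i-1})M_{p+|\mathbf r|-1}$. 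You instead read the definition of a weak-(FC)-sequence literally---$(FC_1)$ holds in $\overline M=M/(x_1,\dots,x_{s-1})M$ itself---peel off the newest generator via $y=xa+y'$, correct $a$ to $c\in\mathbf I^{\mathbf r-\delta(j_0)}M_p$ using $(FC_1)$ in $\overline M$, and hand $y-xc$ to the inductive hypothesis. This buys a genuinely cleaner argument: Artin--Rees disappears from (i) altogether, and you avoid the tacit (and not entirely innocent) interchange between the $(FC_1)$ condition modulo $(x_1,\dots,x_{s-1})M$ and the condition modulo its saturation on which the paper's version rests; the only price is that the thresholds for ``$\mathbf r$ large'' accumulate over the finitely many steps, which is harmless since $(FC_1)$ supplies a bound on $r_{j_0}$ alone, uniformly in $p$ and in the remaining $r_j$. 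One caveat: the index-matching issue you flag is real, but it is settled by the convention---used implicitly in the paper's proof as well---that each $x_{j\alpha}\in\mathfrak I_j\subseteq I_j$ satisfies $(FC_1)$ for the index $i=j$; your appeal to Lemma \ref{Rees Lemma} does not quite establish this, since the hypothesis of the proposition only asserts that the given sequence \emph{is} a maximal weak-(FC)-sequence, not that it was manufactured by that lemma.
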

\begin{proof}
 We prove first $(i)$ by using induction on $k\leq t$. For $k=1$, we shall show that
$$(x_{11},\dots,x_{1i})M_{p+|{\bf r}-\delta(1)|}
\cap {\bold I}^{{\bf r}}M_p=(x_{11},\dots,x_{1i}){\bold I}^{{\bf r}-\delta(1)}M_p,$$ \noindent for all large ${\bold r},$ by
induction on $i\leq m$. For $i=0$, the result trivially holds. Set
$L=(x_{11},\dots,x_{1{i-1}})M:{\mathcal I}^{\infty}.$ Since
$x_{11},\dots,x_{1i}\in I_i$ is a weak-$FC$-sequence  with
respect to $(J,I_1,...,I_q;M),$
$$(x_{1i}M_{p+|{\bf r}-\delta(1)|}+L_{p+|{\bf r}|})
\cap ({\bold I}^{{\bf r}}M_p+L_{p+|{\bf r}|})= x_{1i}{\bold I}^{{\bf r}-\delta(1)}M_p+L_{p+|{\bf r}|},$$ \noindent for all
large ${\bold r}$ and all $p.$ Hence we have

$(x_{1i}M_{p+|{\bf r}-\delta(1)|}+L_{p+|{\bf r}|})
\cap {\bold I}^{{\bf r}}M_p$
$$
\begin{array}{lll}
\vspace{0.3cm}
&=&{\bold I}^{{\bf r}}M_p\cap ({\bold I}^{{\bf r}}M_p+
L_{p+|{\bf r}|})\cap (x_{1i}M_{p+|{\bf r}-\delta(1)|}+L_{p+|{\bf r}|})\\
\vspace{0.3cm}
&=&{\bold I}^{{\bf r}}M_p\cap (x_{1i}{\bold I}^{{\bf r}-\delta(1)}M_p+L_{p+|{\bf r}|})\\
\vspace{0.3cm}
&=&x_{1i}{\bold I}^{{\bf r}-\delta(1)}M_p+{\bold I}^{{\bf r}}M_p\cap L_{p+|{\bf r}|}
\end{array}
$$
\noindent for all large ${\bold r}$ and all $p.$ By Artin-Rees Lemma, for all large ${\bold r}$ and all $p,$ we
have
$${\bold I}^{{\bf r}}M_p\cap L_{p+|{\bf r}|}\subseteq {\bold I}^{{\bf r}}M_p\cap (x_{11},\dots,x_{1{i-1}})M_{p+|{\bf r}-\delta(1)|}$$
\noindent and hence by inductive assumption,
$${\bold I}^{{\bf r}}M_p\cap L_{p+|{\bf r}|}\subseteq (x_{11},\dots,x_{1{i-1}}){\bold I}^{{\bf r}-\delta(1)}M_{p}$$
\noindent for all large ${\bold r}$ and all $p.$ Therefore,
$${\bold I}^{{\bf r}}M_p\cap L_{p+|{\bf r}|}=(x_{11},\dots,x_{1{i-1}}){\bold I}^{{\bf r}-\delta(1)}M_p$$
\noindent for all large ${\bold r}$ and all $p.$ In short we get
$$
\begin{array}{lll}
(x_{1i}M_{p+|{\bf r}-\delta(1)|}+L_{p+|{\bf r}|})
\cap {\bold I}^{{\bf r}}M_p&=&x_{1i}{\bold I}^{{\bf r}-\delta(1)}M_p+(x_{11},\dots,x_{1{i-1}}){\bold I}^{{\bf r}-\delta(1)}M_p\\
&=&(x_{11},\dots,x_{1i}){\bold I}^{{\bf r}-\delta(1)}M_p
\end{array}
$$
\noindent for all large ${\bold r}$ and all $p.$ But now, by Artin-Rees Lemma again, we have
$$(x_{1i}M_{p+|{\bf r}-\delta(1)|}+L_{p+|{\bf r}|})
\cap {\bold I}^{{\bf r}}M_p=(x_{11},\dots,x_{1i})M_{p+|{\bf r}-\delta(1)|}\cap {\bold I}^{{\bf r}}M_p$$ \noindent for all
large ${\bold r}$ and all $p.$ Thus,
$$(x_{11},\dots,x_{1i})M_{p+|{\bf r}-\delta(1)|}\cap {\bold I}^{{\bf r}}M_p=
(x_{11},\dots,x_{1i}){\bold I}^{{\bf r}-\delta(1)}M_p$$ \noindent
for all large ${\bold r}$, all $p$ and $i\leq m.$ In particular we get that
$${\mathfrak I}_1M_{p+|{\bf r}-\delta(1)|}\cap {\bold I}^{{\bf r}}M_p={\mathfrak I}_1{\bold I}^{{\bf r}-\delta(1)}M_p$$
\noindent for all large ${\bold r}$ and all $p$. Thus the result is proved for
$k=1.$

Set ${\mathfrak N}=({\mathfrak J}_1,\dots, {\mathfrak
J}_{k-1})M:{\mathcal I}^{\infty}$. By Artin-Rees Lemma, we have
$${\mathfrak N}_{p+|{\bf r}|}\cap {\bold I}^{{\bf r}}M_p\subseteq {\bold I}^{{\bf r}}M_p
\cap ({\mathfrak J}_1,\dots, {\mathfrak J}_{k-1})M_{p+|{{\bf r}}|-1}$$ \noindent and
$$({\mathfrak N}_{p+|{\bf r}|}+{\mathfrak J}_kM_{p+|\bold r-\delta(k)|})\cap
 {\bold I}^{{\bf r}}M_p\subseteq {\bold I}^{{\bf r}}M_p
\cap ({\mathfrak J}_1,\dots, {\mathfrak J}_k)M_{p+|{{\bf r}}|-1}$$ \noindent for all large
${\bold r}$ and all $p$. Therefore
$${\mathfrak N}_{p+|{\bf r}|}\cap {\bold I}^{{\bf r}}M_p=
{\bold I}^{{\bf r}}M_p\cap ({\mathfrak J}_1,\dots, {\mathfrak
J}_{k-1})M_{p+|{{\bf r}}|-1}$$
\noindent and
\begin{equation}\label{equetion1}
({\mathfrak N}_{p+|{\bf r}|}+{\mathfrak J}_kM_{p+|\bold r-\delta(k)|})\cap {\bold I}^{{\bf r}}M_p=
{\bold I}^{{\bf r}}M_p\cap ({\mathfrak J}_1,\dots,{\mathfrak J}_k)M_{p+|{{\bf r}}|-1}
\end{equation}
\noindent for all large ${\bold r}$ and all $p$. Since the result holds for
$k=1$, we have
$$({\mathfrak N}_{p+|{\bf r}|}+{\mathfrak J}_kM_{p+|\bold r-\delta(k)|})
\cap ({\bold I}^{{\bf r}}M_p+{\mathfrak N}_{p+|{\bf r}|})=
{\mathfrak J}_k{\bold I}^{\bold r-\delta(k)}M_{p}+{\mathfrak N}_{p+|{\bf r}|}$$ \noindent for all large ${\bold r}$ and all $p$. Putting all this facts together we get
$$
\begin{array}{l}
\vspace{0.3cm}
({\mathfrak N}_{p+|{\bf r}|}+{\mathfrak J}_kM_{p+|\bold r-\delta(k)|})\cap {\bold I}^{{\bf r}}M_p\\
\vspace{0.3cm}
 = {\bold I}^{{\bf r}}M_p\cap ({\mathfrak N}_{p+|{\bf r}|}+{\mathfrak
J}_kM_{p+|\bold r-\delta(k)|})\cap ({\bold I}^{{\bf r}}M_p+
{\mathfrak N}_{p+|{\bf r}|})\\
\vspace{0.3cm}
={\bold I}^{{\bf r}}M_p\cap({\mathfrak J}_k{\bold I}^{\bold r-\delta(k)}M_{p}+{\mathfrak N}_{p+|{\bf r}|})\\
\vspace{0.3cm}
={\mathfrak J}_k{\bold I}^{\bold r-\delta(k)}M_{p}+{\mathfrak N}_{p+|{\bf r}|}\cap {\bold I}^{{\bf r}}M_p\\
\vspace{0.3cm}
={\mathfrak J}_k{\bold I}^{\bold r-\delta(k)}M_{p}+({\mathfrak
J}_1,\dots, {\mathfrak J}_{k-1})M_{p+|\bold r|-1}\cap {\bold I}^{{\bf r}}M_p\,\,\,\,\,\,\,\,\,\,\,\,\,\,\,\,\,\,\,\,\,\,\,\,\,\,\,\,\,\,\,\,\, (*)
\end{array}
$$
\noindent for all large ${\bold r}$ and all $p$. But now, by inductive assumption we see that
\begin{equation}\label{equetion3}
({\mathfrak J}_1,\dots, {\mathfrak J}_{k-1})M_{p+|\bold r|-1}\cap {\bold I}^{{\bf r}}M_p=\sum_{j=1}^{k-1}
{\mathfrak J}_j{\bold I}^{\bold r-\delta(j)}M_{p}
\end{equation}
\noindent for all large ${\bold r}$ and all $p$. Hence by (\ref{equetion1}),
$(*)$ and (\ref{equetion3}), we get
$$({\mathfrak J}_1,\dots, {\mathfrak J}_k)M_{p+|\bold r|-
1}\cap {\bold I}^{{\bf r}}M_p=\sum_{j=1}^k{\mathfrak J}_j{\bold I}^{\bold r-\delta(j)}M_{p}$$
\noindent for all large ${\bold r}$ and all $p$.

We now prove $(ii)$. Since $x_{11},\dots,x_{1m},
x_{21},\dots,x_{2n}\dots x_{t1},\dots,x_{tp}$ is a maximal
weak-(FC)-sequence  with respect to $(I_1,\dots,I_q;M)$ in
${\cup}_{i=1}^qI_i,$  by Proposition \ref{Obs1}, we have that
$${\bold I}^{{\bf r}}M_p\subset ({\mathfrak J}_1,\dots, {\mathfrak J}_t)M_{p+|\bold r|-1}$$ \noindent for all large
${\bold r}$ and all $p$. Hence the result follows by using part $(i).$
\end{proof}

The following result is also an immediate consequence of
Proposition \ref{FC JoiRe}.

\begin{cor}
Suppose that $\dim(\mbox{Supp}(M))=\dim(\mbox{Proj}(G))=r$ and
that $I_1,\dots, I_r$ are $R$-submodules of $G_1$ of finite
colength. Assume that $x_1,\dots, x_r$ is a weak-(FC)-sequence
with respect to $(I_1,\dots, I_r;M),$ with $x_i\in I_i$ for
$i=1,\dots,r.$ Then $x_1,\dots, x_r$ is a joint reduction of
$I_1,\dots, I_r$ with respect to $M.$
\end{cor}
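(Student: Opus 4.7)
The plan is to apply Proposition \ref{FC JoiRe}(ii) with $t=q=r$ and each $\mathfrak{J}_j$ equal to the principal submodule $(x_j)$. Under this specialization, the conclusion of Proposition \ref{FC JoiRe}(ii) reads
\[
\mathbf{I}^{\mathbf{r}}M_p=\sum_{j=1}^{r}(x_j)\,\mathbf{I}^{\mathbf{r}-\delta(j)}M_p
\]
for all large $\mathbf{r}$ and all $p$, which is precisely the identity defining a joint reduction of $(I_1,\ldots,I_r)$ with respect to $M$. So the corollary follows once the hypotheses of Proposition \ref{FC JoiRe} are verified; the only substantive point to check is that $x_1,\ldots,x_r$ is a \emph{maximal} weak-(FC)-sequence with respect to $(I_1,\ldots,I_r;M)$ in $\bigcup_{i=1}^{r}I_{i}$.

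By the contrapositive of Proposition \ref{Obs1}, maximality is equivalent to $\overline{M}^{\ast}=0$, where $\overline{M}=M/(x_1,\ldots,x_r)M$. Set $M_j=M/(x_1,\ldots,x_j)M$. Property $(FC_2)$ for the weak-(FC)-element $\overline{x}_j$ in the pair $(\overline{I}_1,\ldots,\overline{I}_r;M_{j-1})$ makes $\overline{x}_j$ a non-zero-divisor on $M_{j-1}^{\ast}$, and since $\overline{x}_j$ lies in the irrelevant ideal $(\overline{G}_{j-1})_{+}$, standard graded commutative algebra gives $\dim M_{j-1}^{\ast}/\overline{x}_jM_{j-1}^{\ast}=\dim M_{j-1}^{\ast}-1$. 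The evident surjection $M_{j-1}^{\ast}/\overline{x}_jM_{j-1}^{\ast}\twoheadrightarrow M_j^{\ast}$ then yields $\dim M_j^{\ast}\leq\dim M_{j-1}^{\ast}-1$. Iterating and using $\dim M^{\ast}\leq\dim\mbox{Supp}(M)=r$ gives $\dim\overline{M}^{\ast}\leq 0$.

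To upgrade this to $\overline{M}^{\ast}=0$ I observe that any $\mathfrak{q}\in\mbox{Ass}(\overline{M}^{\ast})$ is a graded prime of $G$ with $\dim G/\mathfrak{q}\leq 0$, hence must coincide with the graded maximal ideal $\mathfrak{m}+G_{+}$; but then $\mathcal{I}\subseteq G_{+}\subseteq\mathfrak{q}$, contradicting the characterization $\mbox{Ass}(\overline{M}^{\ast})=\{\mathfrak{q}\in\mbox{Ass}(\overline{M}):\mathcal{I}\not\subseteq\mathfrak{q}\}$. So $\overline{M}^{\ast}=0$, the sequence is maximal, and Proposition \ref{FC JoiRe}(ii) delivers the joint-reduction identity. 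The main subtle step will be justifying the one-step dimension drop from $(FC_2)$ alone (since this is a \emph{weak}-(FC)-element, $(FC_3)$ is not available), which requires care with the graded structure and with the passage from the quotient $M_{j-1}^{\ast}/\overline{x}_jM_{j-1}^{\ast}$ to the $\mathcal{I}$-saturation $M_j^{\ast}$.
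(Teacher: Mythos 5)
Your reduction of the corollary to Proposition \ref{FC JoiRe}(ii) with $\mathfrak{J}_j=(x_j)$ is exactly the route the paper takes (the paper offers nothing beyond calling the corollary an immediate consequence of that proposition), and you are right that the one substantive point is that Proposition \ref{FC JoiRe} is stated for a \emph{maximal} weak-(FC)-sequence while the corollary only assumes a weak-(FC)-sequence of length $r$; in flagging this you are being more careful than the paper itself. Your reformulation of maximality as $\overline{M}^{\ast}=0$ via Proposition \ref{Obs1}, and the one-step inequality $\dim M_j^{\ast}\leq\dim M_{j-1}^{\ast}-1$ (which you could also just quote from Proposition \ref{prop3}), are fine.

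The gap is the final step, where you pass from $\dim\mbox{Supp}(\overline{M}^{\ast})\leq 0$ to $\overline{M}^{\ast}=0$. Throughout the paper $\dim\mbox{Supp}(-)$ is the dimension of the support in $\mbox{Proj}(G)$ --- this is forced by the hypothesis $\dim\mbox{Supp}(M)=\dim\mbox{Proj}(G)$ and by Theorem \ref{prop1} and the Kleiman--Thorup results it rests on, and it is the convention under which your bound $\dim\mbox{Supp}(M^{\ast})\leq r$ is valid. With that convention, $\dim\mbox{Supp}(\overline{M}^{\ast})\leq 0$ only says the support is a finite set of closed points of $\mbox{Proj}(G)$; such a point is a \emph{relevant} graded prime $\mathfrak{q}$ lying over $\mathfrak{m}$, so $\mathfrak{m}G\subseteq\mathfrak{q}$ and $G_{+}\not\subseteq\mathfrak{q}$, and it need not contain $\mathcal{I}$: for instance $\mathfrak{q}=(s,\,T_1-T_2)$ in $G=k[[s]][T_1,T_2]$ does not contain the ideal generated by $I_1I_2$ when $I_1=I_2=(T_1,sT_2)$, even though each $I_i$ has finite colength in $G_1$. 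So the dichotomy ``either $\mathfrak{q}=\mathfrak{m}+G_{+}$ or $\mathcal{I}\subseteq\mathfrak{q}$'' fails. If you instead read $\dim$ as Krull dimension over $G$, the dichotomy becomes valid but your count only yields $\dim\overline{M}^{\ast}\leq 1$, since then $\dim M^{\ast}\leq r+1$. Either way you are exactly one dimension short, and that missing dimension is where the actual content lies: ruling out a component of $\mbox{Supp}(\overline{M}^{\ast})$ concentrated over the closed point of $\mbox{Spec}(R)$ requires using the finite-colength hypothesis on the $I_i$ together with the Nakayama-type condition $\overline{x}_j\notin\mathfrak{m}\overline{I}_j$ built into $(FC_1)$ (dropping that condition, one can exhibit length-$r$ sequences satisfying everything else that are not joint reductions). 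As it stands, maximality is asserted rather than proved, and the corollary is not established.
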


Since the existence of maximal-weak-(FC)-sequences is secured by
Proposition \ref{Obs1}, the above corollary gives us the existence
of joint reductions for a family of $R$-submodules $I_1,\dots,
I_r$ of $G_1$ of finite colength. This result was also obtained in
\cite[Proposition 3.7 and Remark 3.8]{Bedregal-Perez}.

\section{Buchsbaum-Rim multiplicities}

Fix $(R, \mathfrak m)$ an arbitrary
Noetherian local ring; fix two graded $R$-algebras $G'=\oplus
G'_n$ and $G=\oplus G_n,$ with $G'\subseteq G,$ that, as usual,
are generated as algebras by finitely many elements of degree one
such that $\ell(G_1/G'_1)<\infty;$ and fix $M$ a finitely
generated graded $G$-module. Let $r:=\dim(\mbox{Proj}(G))$ be the
dimension of $\mbox{Proj}(G).$ As a function of $n,q$, the length,
$$h(n,q):=\ell(M_{n+q}/G'_nM_q)$$
\noindent is eventually a polynomial in $n,q$, denoted by
$P_{G',G,M}(n,q),$ of total degree equal to
$\dim(\mbox{Supp}(M)),$ which is at most $r,$ (see \cite[Theorem
5.7]{Kleiman-Thorup}) and the coefficient of $n^{r-j}q^j/(r-j)!j!$
is denoted by $e^j(G',G,M),$ for all $j=0,\ldots, r,$  and it is
called the $j^{\mbox{th}}$  {\it associated Buchsbaum-Rim
multiplicity} of the pair $(G',G)$  with respect to $M.$
Notice that $e^j(G',G,M)=0$ if $\dim(\mbox{Supp}(M))<r.$ The
number $e^0(G',G,M)$ will be called the {\it Buchsbaum-Rim
multiplicity} of the pair $(G',G)$  with respect to $M,$ and
will also be denoted by $e_{BR}(G',G,M).$ The notion of
Buchsbaum-Rim multiplicity for modules goes back to
\cite{Buchsbaum-Rim} and it was carried out in the above
generality in
\cite{Kleiman-Thorup}, \cite{Kleiman-Thorup2}, \cite{Kirby}, \cite{Kirby-Rees1},  \cite{Kirby-Rees2}, \cite{Katz}, \cite{Bedregal-Perez}
and \cite{Simis-Ulrich-Vasconcelos}.

\begin{rem}\label{remark0}{\rm
If $I$ is a finitely generated $R$-submodule of $G_1$ such that
$\ell(G_1/I)<\infty$ then, setting $G'=R[I],$ the $R$-subalgebra
of $G$ generated in degree one by $I,$ we denote $e^j(G',G,M)$
(resp.  $e_{BR}(G',G,M)$) by $e^j(I,M)$ (resp. $e_{BR}(I,M)$),
which is called the {\it $j^{\mbox{th}}$-Associated Buchsbaum-Rim
multiplicity} (resp. {\it Buchsbaum-Rim multiplicity} ) {\it of
the pair} $(I,G)$ {\it with respect to} $M.$ }
\end{rem}

\section{Mixed multiplicities}\label{Section}

We keep the notations of Setup $(1)$. In this section we define
the notion of mixed multiplicities of $J,I_1,\dots, I_q$ with
respect to $M.$ The main results of this section establish mixed
multiplicity formulas by means of Buchsbaum-Rim multiplicities and
also determines the positivity of such mixed multiplicities.

Consider the function
$$h(n,p,{\bold r}):=\ell \left(\frac{{\bf I}^{\bf r}M_{n+p}}
{J^{n}{\bf I}^{\bf r}M_{p}}\right ).$$

\begin{thm}\label{prop1} Keeping the Setup $(1)$, set
$D=\dim(\mbox{Supp}(M^*)).$ Then $h(n,p,{\bold r})$ is a
polynomial of degree $D$ for all large $n,p,{\bold r}.$
\end{thm}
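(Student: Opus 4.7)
\noindent{\bf Proof plan.} My plan is to realize $h(n,p,{\bf r})$ as the Hilbert function of a finitely generated multigraded module over a standard multigraded Noetherian algebra and invoke the multivariate Hilbert polynomial theorem, after reducing to $M=M^*$. For the reduction: the graded submodule $N=0_M:{\mathcal I}^\infty$ is finitely generated over $G$, so ${\mathcal I}^kN=0$ for some $k$. Since any generator of ${\bf I}^{\bf r}$ can be regrouped as $\min_ir_i$ factors from $I_1\cdots I_q$ times extra factors, one has ${\bf I}^{\bf r}\subseteq{\mathcal I}^{\min_ir_i}$ in $G$, whence ${\bf I}^{\bf r}N=0$ whenever $\min_ir_i\geq k$. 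For such ${\bf r}$ both ${\bf I}^{\bf r}M_s={\bf I}^{\bf r}M^*_s$ and $J^n{\bf I}^{\bf r}M_p=J^n{\bf I}^{\bf r}M^*_p$ hold, so $h$ is unchanged upon replacing $M$ by $M^*$. I may thus assume $M=M^*$, in which case $D=\dim(\mbox{Supp}(M))$.

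\medskip
The multigraded setup: let
$$
{\mathcal S}:=R[Ju_0,\,G_1u_1,\,I_1v_1,\ldots,I_qv_q]\subseteq G[u_0,u_1,v_1,\ldots,v_q]
$$
be the standard $\N^{q+2}$-graded Noetherian $R$-subalgebra, and form
$$
{\mathcal F}:=\bigoplus_{n,p,{\bf r}\geq 0}\frac{{\bf I}^{\bf r}M_{n+p}}{J^n{\bf I}^{\bf r}M_p}\,u_0^nu_1^pv_1^{r_1}\cdots v_q^{r_q}.
$$
The inclusions $J\cdot J^n{\bf I}^{\bf r}M_p\subseteq J^{n+1}{\bf I}^{\bf r}M_p$, $G_1\cdot J^n{\bf I}^{\bf r}M_p\subseteq J^n{\bf I}^{\bf r}M_{p+1}$, and $I_i\cdot J^n{\bf I}^{\bf r}M_p\subseteq J^n{\bf I}^{{\bf r}+\delta(i)}M_p$ promote ${\mathcal F}$ to a multigraded ${\mathcal S}$-module, and presenting ${\mathcal F}$ as the quotient of the two ${\mathcal S}$-submodules $\bigoplus{\bf I}^{\bf r}M_{n+p}$ and $\bigoplus J^n{\bf I}^{\bf r}M_p$ of the ambient multigraded module $\bigoplus M_{n+p+|{\bf r}|}$ gives, by Noetherianity of ${\mathcal S}$, finite generation of ${\mathcal F}$ over ${\mathcal S}$. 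Each graded piece has finite $R$-length, since at any $\mathfrak p\neq\mathfrak m$ we have $G_{1,\mathfrak p}=J_\mathfrak p$ and hence ${\bf I}^{\bf r}M_{n+p,\mathfrak p}=J^n{\bf I}^{\bf r}M_{p,\mathfrak p}$. The multivariate Hilbert theorem (cf.\ the Bhattacharya-type statement used in Herrmann-Hyry-Ribbe-Tang) then produces a polynomial $h$ for all large $(n,p,{\bf r})$.

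\medskip
The main obstacle is to pin the total degree at exactly $D$. For $\deg h\geq D$: fix ${\bf r}$ with all $r_i$ large enough that ${\bf I}^{\bf r}N=0$ and $\mbox{Supp}({\bf I}^{\bf r}M)=\mbox{Supp}(M)$, and observe that $h(n,p,{\bf r})$ as a function of $(n,p)$ alone is the Hilbert function from Section 3 attached to the pair $(R[J],G)$ and the graded $G$-module ${\bf I}^{\bf r}M$; by the Kleiman-Thorup formula this has total $(n,p)$-degree $\dim\mbox{Supp}({\bf I}^{\bf r}M)=D$, so $\deg h\geq D$. For $\deg h\leq D$: I would use the filtration
$$
{\bf I}^{\bf r}M_{n+p}\supseteq J{\bf I}^{\bf r}M_{n+p-1}\supseteq\cdots\supseteq J^n{\bf I}^{\bf r}M_p
$$
to decompose $h(n,p,{\bf r})=\sum_{k=0}^{n-1}\ell(J^k{\bf I}^{\bf r}M_{n+p-k}/J^{k+1}{\bf I}^{\bf r}M_{n+p-k-1})$ and bound each summand by the Hilbert polynomial of the multigraded associated graded of ${\bf I}^{\bf r}M$ with respect to $J$, which has total degree at most $D-1$ by Kleiman-Thorup; diagonal summation then gives $\deg h\leq D$, completing the proof.
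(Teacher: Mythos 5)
Your proof has a genuine gap at its central step: the module ${\mathcal F}=\bigoplus_{n,p,{\bf r}}{\bf I}^{\bf r}M_{n+p}/J^{n}{\bf I}^{\bf r}M_{p}$ is in general \emph{not} a finitely generated ${\mathcal S}$-module, so the multivariate Hilbert polynomial theorem does not apply to it. Your justification---that ${\mathcal F}$ is a subquotient of $\bigoplus M_{n+p+|{\bf r}|}$ and ${\mathcal S}$ is Noetherian---fails because that ambient module is itself not finitely generated over ${\mathcal S}$; Noetherianity only controls submodules of \emph{finitely generated} modules. The failure is already visible in the pure $u_0$-direction: ${\mathcal F}_{(n,0,{\bf 0})}=M_n/J^nM_0$, and finite generation would force $M_n=J^{n-c}M_c+J^nM_0$ for all $n\gg 0$ and some fixed $c$. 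Take $R$ a discrete valuation ring with uniformizer $t$, $G=M=R[T]$, $J=tRT$ (so $\ell(G_1/J)=1$): then $M_n/J^nM_0\cong R/t^nR$ has length $n$, while the right-hand side has length at most $c$. This is exactly why the paper does not work with ${\mathcal F}$ directly: it forms the associated graded module $\bigoplus_{n,p,{\bf r}}J^n{\bf I}^{\bf r}M^*_p/J^{n+1}{\bf I}^{\bf r}M^*_{p-1}$, which \emph{is} generated in degree zero (each algebra generator carries one filtration step onto the next because $M$ is generated in degree zero), applies \cite[Lemma 4.3]{Kleiman-Thorup} to it, and then recovers $h$ by telescoping along the filtration ${\bf I}^{\bf r}M_{n+p}\supseteq J{\bf I}^{\bf r}M_{n+p-1}\supseteq\cdots\supseteq J^{n}{\bf I}^{\bf r}M_{p}$; the summation over the $n$ steps is what produces both the polynomiality in $n$ and the extra $+1$ in the degree. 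You write down precisely this filtration in your last paragraph, but you use it only for an upper degree bound, while the polynomiality itself still rests on the false finite-generation claim.

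A second, smaller gap is in the reduction to $M^*$: you infer $\ell({\bf I}^{\bf r}M_{n+p}/J^n{\bf I}^{\bf r}M_p)=\ell({\bf I}^{\bf r}M^*_{n+p}/J^n{\bf I}^{\bf r}M^*_p)$ from ${\bf I}^{\bf r}N=0$. What is actually needed is ${\bf I}^{\bf r}M_{n+p}\cap N=0$ (the kernel of the surjection ${\bf I}^{\bf r}M_{n+p}\rightarrow{\bf I}^{\bf r}M^*_{n+p}$ is this intersection), and the fact that ${\bf I}^{\bf r}$ annihilates $N$ does not prevent nonzero elements of ${\bf I}^{\bf r}M$ from lying in $N$. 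The paper obtains the needed vanishing by Artin--Rees, writing $N\cap{\bf I}^{\bf r}M={\bf I}^{{\bf r}-{\bf c}}(N\cap{\bf I}^{\bf c}M)\subseteq{\bf I}^{{\bf r}-{\bf c}}N=0$ for ${\bf r}\gg{\bf c}$. Your identification of the degree as exactly $D$ by freezing ${\bf r}={\bf u}$ and invoking \cite[Theorem 5.7]{Kleiman-Thorup} for the module $\bigoplus_s{\bf I}^{\bf u}M^*_s$ is essentially the paper's own argument and is fine once the two points above are repaired.
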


\begin{proof}
We will prove first the following equality
\begin{equation}\label{equation1}\ell\left(\frac{{\bf I}^{\bf r}M_{n+p}}{J^{n}{\bf I}^{\bf r}M_{p}}\right )
=\ell\left(\frac{{\bf I}^{\bf r}M^*_{n+p}}
{J^{n}{\bf I}^{\bf r}M^*_{p}}\right )\end{equation}

Notice that
$$\begin{array}{lll} \vspace{0.3cm}\ell\left(\frac{{\bf I}^{\bf r}M^*_{n+p}}{J^{n}{\bf I}^{\bf r}M^*_{p}}\right) & = &
\ell\left(\frac{N_{n+|{\bf r}|+p}+{\bf I}^{\bf r}M_{n+p}}{N_{n+|{\bf r}|+p}+J^{n}{\bf I}^{\bf r}M_{p}}\right )\\
\vspace{0.3cm} & = & \ell\left(\frac{{\bf I}^{\bf r}M_{n+p}}{J^{n}{\bf I}^{\bf r}M_{p}+N_{n+|{\bf r}|+p}\cap {\bf I}^{\bf r}M_{n+p}}\right).
\end{array}$$

By Artin-Rees Lemma, there exist integers $n_0, p_0, c_1,\ldots,
c_q\in \N$ such that
$$N_{n+|{\bf r}|+p}\cap {\bf I}^{\bf r}M_{n+p}={\bf I}^{{\bf r}-{\bf c}}(N_{n+p+|{\bold
c}|}\cap {\bf I}^{\bf c}M_{n+p}),$$
\noindent for all $p\geq p_0, n\geq n_0, {\bold r}\geq {\bold c}.$
Since for all large $t$  we have $N{\mathcal I}^t=0,$ it follows that
$$N_{n+|{\bf r}|+p}\cap {\bf I}^{\bf r}M_{n+p}=0$$
\noindent and hence we prove equality
(\ref{equation1}).

Let $$K=\bigoplus_{n,p,{\bf r}}\frac{{\bf I}^{\bf r}M^*_{n+p}}
{J^{n}{\bf I}^{\bf r}M^*_{p}}$$ and $$h^*(n,p,{\bf r})=\ell \left(\frac{{\bf I}^{\bf r}M^*_{n+p}}
{J^{n}{\bf I}^{\bf r}M^*_{p}}\right )$$
We want to prove that $h^*$ is a polynomial of degree $\dim(\mbox{Supp}(M^*)).$
Consider $H=\oplus_{n}J^n,$ $H^i=\oplus_{r_i}I_i^{r_i}$ and $N=\oplus_{n,p,{\bf r}}\frac{J^n{\bf I}^{\bf r}M_p^*}{J^{n+1}{\bf I}^{\bf r}M_{p-1}^*}$,
 it is easy to see that $N$ is a multigraded $H\otimes G\otimes H^1\otimes\cdots \otimes H^q$-module,
 by \cite[Lemma 4.3]{Kleiman-Thorup} we have that  $$\ell \left(\frac{J^n{\bf I}^{\bf r}M_p^*}{J^{n+1}{\bf I}^{\bf r}M_{p-1}^*}\right)$$ is a polynomial
of degree $\dim(\mbox{Supp}(N)))$. Therefore $h^*(n,p,{\bf r})$ is a polynomial of degree $\dim(\mbox{Supp}(K))=\dim(\mbox{Supp}(N))+1.$

On the other hand  take $r_1=\ldots
=r_q=u$ and fix the $u$ and let $L=\oplus_{n}{\bf I}^{\bf u}M_n^*=\oplus_{n}L_n.$ Define $h_L(n,p)=\ell \left(\frac{L_{n+p}}{J^nL_p}\right)$, by \cite[Theorem 5.7]{Kleiman-Thorup} we have that $h_L(n,p)$ is a polynomial of degree $\dim(\mbox{Supp}(L))$.

Now
consider the exact sequence
\begin{equation}\label{equation5}0\longrightarrow
L \longrightarrow M^* \longrightarrow
M^*/L\longrightarrow 0.
\end{equation}
\noindent Since
$\mathcal I$ is not contained $\sqrt{\mbox{Ann}M},$ there exist an
element $x\in I_1^u\cdots I_q^u$ which is not a zero-divisor of
$M^*.$ Thus

\begin{equation}\label{equation4}\mbox{dim}(\mbox{Supp}(L)=\mbox{dim}(\mbox{Supp}(M^*))>\mbox{dim}(\mbox{Supp}(M^*/L)).
\end{equation}
We have that $\dim(\mbox{Supp}(L))=\dim(\mbox{Supp}(M^*)).$

As
$h_L(n,p)=h^*(n,p,{\bf u})$ we have $\deg(h_L(n,p))=\deg(h^*(n,p,{\bf r}))$ therefore $h^*(n,p,{\bf r})$ is a polynomial of degree $\dim(\mbox{Supp}(M^*)).$

Now, $$h^*(n,p, {\bf
r}):=\ell\left(\frac{{\bf I}^{\bf r}M^*_{n+p}} {J^{n}{\bf I}^{\bf
r}M^*_{p}}\right)$$ is a polynomial of degree $D$, where
$D:=\dim(\mbox{Supp}(M^{*}))$ for all large $n,p,{\bf r}.$
Therefore

$$\ell\left(\frac{{\bf I}^{\bf r}M_{n+p}}{J^{n}{\bf I}^{\bf r}M_{p}}\right)$$ \noindent is a polynomial of degree $D$ for all
large ${\bf r}.$
\end{proof}

If we write the terms of total degree
$D=\dim(\mbox{Supp}(M^{*}))$ of the polynomial
$$\ell\left(\frac{{\bf I}^{\bf r}M_{n+p}}{J^{n}{\bf I}^{\bf r}M_{p}}\right)$$ \noindent in the
form
$$B(n,p,{\bf r})=\sum_{k_0+|{\bf k}|+j=D}\;\frac{1}{k_0!{\bf k}!j!}e^j(J^{[k_0]},I_1^{[k_1]},\dots,I_q^{[k_q]};M)
n^{k_0}p^j{\bf r}^{\bf k}.$$ \noindent The coefficients
$e^j(J^{[k_0]},I_1^{[k_1]},\dots,I_q^{[k_q]};M)$ are called the
$j^{\mbox th}$-{\it mixed multiplicity} of $(J, I_1,\ldots, I_q;
M)$. We call $e^0(J^{[k_0]},I_1^{[k_1]},\dots,I_q^{[k_q]};M)$ the
{\it mixed multiplicity} of $(J, I_1,\ldots, I_q; M)$ of type
$(k_0,k_1,\dots,k_q)$.

\begin{rem}\label{remark1}
{\rm It follows by the equality (\ref{equation1}) that
$$e^j(J^{[k_0]},I_1^{[k_1]},\dots,I_q^{[k_q]};M)=e^j(J^{[k_0]},I_1^{[k_1]},\dots,I_q^{[k_q]};M^*)$$
\noindent for all $k_0+|{\bf k}|+j=D.$
}\end{rem}

\begin{lem}\label{lema crucial} Keeping the setup $(1)$ we have
 \begin{itemize}
\item [(i)] $e^j(J^{[k_0]},I_1^{[0]},\dots,I_q^{[0]};M)\!\neq \!0$ and
$e^j(J^{[k_0]},I_1^{[0]},\dots,I_q^{[0]};M)\!\!=\!\!e^j_{BR}(J;M^*).$

\item [(ii)] $e^j(J^{[k_0]},I_1^{[0]},\dots,I_q^{[0]};M)=e^j_{BR}(J;M)$ if
$\mbox{ht}(\mathcal{I}+\mbox{Ann}M/\mbox{Ann}M)>0.$
\end{itemize}
\end{lem}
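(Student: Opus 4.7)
The plan is to identify both mixed multiplicities with the leading coefficient of a Buchsbaum--Rim Hilbert polynomial. By Remark \ref{remark1} I may assume $M=M^*$. Writing $D=\dim(\mbox{Supp}(M^*))$, the quantity $e^j(J^{[k_0]},I_1^{[0]},\dots,I_q^{[0]};M)/(k_0!j!)$ is the coefficient of $n^{k_0}p^j$ in the total-degree-$D$ component $B(n,p,{\bf r})$ of the polynomial $P(n,p,{\bf r})$ produced by Theorem \ref{prop1}. Since $k_0+j=D$ already exhausts the total degree of $P$, this coefficient carries no ${\bf r}$-dependence; in particular, for ${\bf u}=(u,\dots,u)$ with $u\gg 0$, it also equals the coefficient of $n^{k_0}p^j$ in the $(n,p)$-Hilbert polynomial $\ell({\bf I}^{\bf u}M^*_{n+p}/J^n{\bf I}^{\bf u}M^*_p)$.

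To compare this with the Buchsbaum--Rim Hilbert polynomial $\ell(M^*_{n+p}/J^nM^*_p)$, I reuse the short exact sequence
$$0\to L\to M^*\to M^*/L\to 0,\qquad L_n:={\bf I}^{\bf u}M^*_n,$$
from the proof of Theorem \ref{prop1}. Because some element of ${\bf I}^{\bf u}$ is a non-zero-divisor on $M^*$, we have $\dim(\mbox{Supp}(L))=D$ and $\dim(\mbox{Supp}(M^*/L))<D$. Invoking the additivity of Hilbert polynomials of the pair $(R[J],G)$ on short exact sequences (a standard fact from \cite{Kleiman-Thorup}: strictly lower-dimensional terms contribute only to parts of degree less than $D$), the total-degree-$D$ parts of the Hilbert polynomials of $L$ and of $M^*$ with respect to $J$ coincide. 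Comparing coefficients of $n^{k_0}p^j$ yields $e^j(J^{[k_0]},I_1^{[0]},\dots,I_q^{[0]};M)=e^j_{BR}(J;M^*)$, and the non-vanishing is the standard positivity of the leading Buchsbaum--Rim coefficient.

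For (ii), the hypothesis $\mbox{ht}(\mathcal I+\mbox{Ann}M/\mbox{Ann}M)>0$ forces $\dim(\mbox{Supp}(N))<\dim(\mbox{Supp}(M))$, so the short exact sequence $0\to N\to M\to M^*\to 0$, combined with the same additivity principle, gives $e^j_{BR}(J;M)=e^j_{BR}(J;M^*)$; assembled with (i), this is (ii). The main obstacle is the step in the previous paragraph: extracting the equality of top-degree parts from $0\to L\to M^*\to M^*/L\to 0$ requires, at the level of bigraded Hilbert functions, an Artin--Rees-type identity $J^nL_p=J^nM^*_p\cap L_{n+p}$ for large $n,p$ and ${\bf u}$, which lifts the short exact sequence to the quotients and produces the additivity relation $\ell(M^*_{n+p}/J^nM^*_p)=\ell(L_{n+p}/J^nL_p)+\ell((M^*/L)_{n+p}/J^n(M^*/L)_p)$ needed to compare leading terms; once this intersection identity is verified in high degree, everything else is routine.
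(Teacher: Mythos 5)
Your proposal is correct and follows essentially the same route as the paper: specialize ${\bf r}=(u,\dots,u)$, identify the resulting two-variable polynomial with the Buchsbaum--Rim Hilbert polynomial of $L={\bf I}^{\bf u}M^*$, and apply the additivity of associated Buchsbaum--Rim multiplicities on the two short exact sequences $0\to L\to M^*\to M^*/L\to 0$ and $0\to (0_M:\mathcal I^\infty)\to M\to M^*\to 0$ together with the dimension drops. The Artin--Rees-type verification you flag at the end is exactly what the cited additivity result (\cite[Theorem (6.7a)]{Kleiman-Thorup}) supplies, so the paper simply invokes it rather than reproving it.
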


\begin{proof}
We prove first $(i).$ By Proposition \ref{prop1} there exist a
positive integer $u$ such that
$$\ell\left(\frac{{\bf I}^{\bf r}M_{n+p}}{J^{n}{\bf I}^{\bf r}M_{p}}\right)=\ell\left(\frac{{\bf I}^{\bf r}M^*_{n+p}}
{J^{n}{\bf I}^{\bf r}M^*_{p}}\right)$$ \noindent is a polynomial
of degree $D$ for all $n,p,r_1,\ldots, r_q\geq u.$ We shall denote
by $P(n,p, {\bold r})$ the above polynomial.  Take $r_1=\ldots
=r_q=u$ and fix the $u.$ Set $H(n,p)=P(n,p,u,\ldots, u).$ Then we
get
$$H(n,p)=\sum_{k_0+j=D}\;\frac{1}{k_0!j!}\;e^j(J^{[k_0]},I_1^{[0]},\dots,I_q^{[0]};M)
p^jn^{k_0} + ...$$

\noindent Now set  $L=I_1^u\cdots I_q^uM^*$.

 Hence, we have by $(\ref{equation4})$ and \cite[Theorem
(5.7)]{Kleiman-Thorup} that the function
$\varphi(n,p)\!=\!\ell\left(\frac{L_{n+p}}
{J^{n}L_p}\right)$ is eventually a polynomial of
degree at most $D=\mbox{dim}(\mbox{Supp}(M^*)).$ Moreover, for all
$n,p\gg 0$ we have that
$$\varphi(n,p)=\sum_{k_0+j=D}\;\frac{1}{k_0!j!}\;e^j_{BR}(J,L)
p^jn^{k_0} + ...$$
\noindent But clearly
$\varphi(n,p)=P(n,p,u,\ldots, u)$ and hence
\begin{equation}\label{equation15} e^j(J^{[k_0]},I_1^{[0]},\dots,I_q^{[0]};M)=e^j_{BR}(J,L)\neq 0.\end{equation}

\noindent On the other hand, by the additivity property of the
associated Buchsbaum-Rim multiplicities (see \cite[Theorem
(6.7a)]{Kleiman-Thorup}) and (\ref{equation5}) and
(\ref{equation4}), we have that
\begin{equation}\label{equation14} e^j_{BR}(J,L)=e^j_{BR}(J,M^*).\end{equation}
Now the result follows by (\ref{equation15}) and
(\ref{equation14}).

We now prove $(ii)$. Consider the exact sequence
$$0\longrightarrow (0_M:{\mathcal I}^{\infty})\longrightarrow M
\longrightarrow M^*\longrightarrow 0.$$ We can easily show that
$$\mbox{Ass}_{G}(M)=\mbox{Ass}_{G}(0_M:{\mathcal I}^{\infty})\cup
\mbox{Ass}_{G}(M^*)$$ \noindent and $$\mbox{Ass}_{G}(0_M:{\mathcal
I}^{\infty})\cap \mbox{Ass}_{G}(M^*)=\emptyset.$$ Hence, since
also $\mbox{ht}(\mathcal{I}+\mbox{Ann}M/\mbox{Ann}M)>0$, any prime
ideal $\mathfrak p\in \mbox{Ass}_{G}(0_M:{\mathcal I}^{\infty})$
is a non-minimal element in $\mbox{Ass}_{G}(M).$ From this follows
that
\begin{equation}\label{equation6}\mbox{dim}(\mbox{Supp}((0_M:{\mathcal
I}^{\infty}))< \mbox{dim}(\mbox{Supp}(M))=
\mbox{dim}(\mbox{Supp}(M^*)).\end{equation}

Therefore, by the additivity property of the associated
Buchsbaum-Rim multiplicities (see \cite[Theorem (6.7a)]{Kleiman-Thorup}), we have that
$e^j_{BR}(J,M^*)=e^j_{BR}(J,M).$ Hence by $(i)$, we get
$$e^j(J^{[k_0]},I_1^{[0]},\dots,I_q^{[0]};M)=e^j_{BR}(J,M^*)=
e^j_{BR}(J,M).$$
\end{proof}

The following proposition plays a crucial role for establishing
the relation between associated mixed multiplicities and
Buchsbaum-Rim multiplicities.

\begin{prop}\label{prop2}
Keeping the setup $(1)$,  the following statements hold.
\begin{itemize}
\item [(i)] If $x\in I_i,\;i\geq 1,$ is an (FC)-element  with respect to
$(J,I_1,...,I_q;M)$, then  $$e^j(J^{[k_0]},I_1^{[k_1]},\dots,I_q^{[k_q]};M)=e^j(J^{[k_0]},I_1^{[k_1]},\dots, I_i^{[k_i-1]},\dots,I_q^{[k_q]};\overline{M}),$$ \noindent where $k_i$ is a positive
integer and $\overline{M}:=M/xM.$
\item [(ii)] If $e^j(J^{[k_0]},I_1^{[k_1]},\dots,I_q^{[k_q]};M)\neq 0,$ then for any $i\geq 1$
such that $k_i>0,$ there exists an (FC)-element $x\in I_i$ with respect to
$(J,I_1,...,I_q;M)$.
\end{itemize}
\end{prop}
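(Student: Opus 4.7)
The plan is to derive both parts from a multiplication-by-$x$ exact sequence combined with a leading-coefficient comparison. By Remark \ref{remark1} one may replace $M$ with $M^{\ast}$; condition $(FC_2)$ then forces $0_M:x\subseteq 0_M:\mathcal{I}^{\infty}=0$, so $x$ is a nonzerodivisor on $M$.

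For part $(i)$, for each multidegree $(n,p,{\bf r})$ with $r_i$ above the threshold of $(FC_1)$, I would assemble the exact sequence
\[
0\to K(n,p,{\bf r})\to \frac{{\bf I}^{{\bf r}-\delta(i)}M_{n+p}}{J^{n}{\bf I}^{{\bf r}-\delta(i)}M_{p}}\xrightarrow{\cdot x}\frac{{\bf I}^{{\bf r}}M_{n+p}}{J^{n}{\bf I}^{{\bf r}}M_{p}}\to \frac{{\bf I}^{{\bf r}}\overline{M}_{n+p}}{J^{n}{\bf I}^{{\bf r}}\overline{M}_{p}}\to 0,
\]
where $K(n,p,{\bf r})=((J^{n}{\bf I}^{{\bf r}}M_{p}:x)\cap {\bf I}^{{\bf r}-\delta(i)}M_{n+p})/J^{n}{\bf I}^{{\bf r}-\delta(i)}M_{p}$. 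The identification of the cokernel uses $(FC_1)$ applied with $p$ replaced by $n+p$ (giving ${\bf I}^{{\bf r}}M_{n+p}\cap xM_{|{\bf r}|+n+p-1}=x{\bf I}^{{\bf r}-\delta(i)}M_{n+p}$) together with a second-isomorphism-theorem computation.

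The main obstacle is to show that $\ell(K(n,p,{\bf r}))$ eventually agrees with a polynomial of total degree strictly below $D-1$, where $D=\dim\mbox{Supp}(M^{\ast})$. I would combine $(FC_1)$ with a multigraded Artin--Rees argument for the filtrations $\{J^{n}\}$ and $\{{\bf I}^{{\bf r}}\}$, reducing the colon $(J^{n}{\bf I}^{{\bf r}}M_{p}:x)\cap {\bf I}^{{\bf r}-\delta(i)}M_{n+p}$ to $J^{n}{\bf I}^{{\bf r}-\delta(i)}M_{p}$ modulo a correction controlled by $0_M:x=0$; this is the graded counterpart of the kernel estimate in \cite{Manh-Viet} and is the delicate step, because $(FC_1)$ only controls ${\bf I}^{{\bf r}}M\cap xM$ and must be promoted to its $J^{n}$-scaled analog. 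Granting this bound, the exact sequence yields
\[
h_{\overline{M}}(n,p,{\bf r})=h_{M}(n,p,{\bf r})-h_{M}(n,p,{\bf r}-\delta(i))
\]
modulo terms of total degree below $D-1$. Comparing the coefficient of $n^{k_{0}}p^{j}{\bf r}^{{\bf k}-\delta(i)}$, the $r_{i}$-finite difference of the degree-$D$ summand $\frac{1}{k_{0}!{\bf k}!j!}\,e^{j}(J^{[k_{0}]},I_{1}^{[k_{1}]},\ldots,I_{q}^{[k_{q}]};M)\,n^{k_{0}}p^{j}{\bf r}^{{\bf k}}$ contributes $\frac{1}{k_{0}!({\bf k}-\delta(i))!j!}\,e^{j}(J^{[k_{0}]},I_{1}^{[k_{1}]},\ldots,I_{q}^{[k_{q}]};M)\,n^{k_{0}}p^{j}{\bf r}^{{\bf k}-\delta(i)}$, while the same monomial's coefficient on the left-hand side is $\frac{1}{k_{0}!({\bf k}-\delta(i))!j!}\,e^{j}(J^{[k_{0}]},I_{1}^{[k_{1}]},\ldots,I_{i}^{[k_{i}-1]},\ldots,I_{q}^{[k_{q}]};\overline{M})$; this is the identity in $(i)$.

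For part $(ii)$, Proposition \ref{Obs1} produces a weak-(FC)-element $x\in I_{i}$, which satisfies $(FC_1)$ and $(FC_2)$; these are the only hypotheses used in the exact sequence and kernel bound. Because the hypothesis gives $e^{j}(J^{[k_{0}]},I_{1}^{[k_{1}]},\ldots,I_{q}^{[k_{q}]};M)\neq 0$ with $k_{i}>0$, the right-hand side of the displayed polynomial identity has a nonzero coefficient at $n^{k_{0}}p^{j}{\bf r}^{{\bf k}-\delta(i)}$, so $h_{\overline{M}}$ is a polynomial of degree exactly $D-1$. Theorem \ref{prop1} applied to $\overline{M}$ identifies this degree with $\dim\mbox{Supp}(\overline{M}^{\ast})$, hence $\dim\mbox{Supp}(\overline{M}^{\ast})=D-1=\dim\mbox{Supp}(M^{\ast})-1$, which is $(FC_3)$. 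Therefore $x$ is an (FC)-element with respect to $(J,I_{1},\ldots,I_{q};M)$.
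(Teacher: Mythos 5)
Your proposal is correct and follows essentially the same route as the paper: both arguments reduce to the exact difference formula $h_{\overline{M}}(n,p,{\bf r})=h_{M}(n,p,{\bf r})-h_{M}(n,p,{\bf r}-\delta(i))$ obtained from $(FC_1)$ and $(FC_2)$ (the paper derives it by a direct chain of length computations rather than your four-term exact sequence, but the content is identical), followed by the same leading-coefficient comparison for $(i)$ and the same use of Proposition \ref{Obs1} plus Theorem \ref{prop1} to recover $(FC_3)$ for $(ii)$. The one step you flag as delicate --- promoting ${\bf I}^{\bf r}M\cap xM=x{\bf I}^{{\bf r}-\delta(i)}M$ to its $J^{n}$-scaled analogue $J^{n}{\bf I}^{\bf r}M_{p}\cap xM_{n+|{\bf r}|+p-1}=xJ^{n}{\bf I}^{{\bf r}-\delta(i)}M_{p}$ --- requires no Artin--Rees argument at all: since $x$ is an (FC)-element with respect to the family $(J,I_{1},\dots,I_{q};M)$ of $q+1$ submodules, condition $(FC_1)$ is stated for the full multi-index $(n,r_{1},\dots,r_{q})$ and already contains the $J^{n}$-scaled identity verbatim; combined with $0_{M^{*}}:x=0$ this makes your kernel $K(n,p,{\bf r})$ vanish outright for $r_{i}\ge r_{i}'$. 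With that observation your argument closes with no remaining gap.
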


\begin{proof}
By Proposition \ref{prop1} we know that the function
$$\ell\left(\frac{{\bf I}^{\bf r}M_{n+p}}{J^{n}{\bf I}^{\bf r}M_{p}}\right).$$
is, for large $n,p,\bold r,$ a polynomial function, which we
denote by $B(n,p,\bold r)$, of degree at most
$D=\dim(\mbox{Supp}(M^*)).$
 Set
$\overline{M}=M/xM$, ${\overline{M}}^*:=M/xM:{\mathcal
I}^{\infty},$ it is clear that ${\overline{M}}^*\simeq
\frac{\overline{M}}{0_{\overline M}:{\mathcal I}^{\infty}}.$
\noindent Then, for all $n\gg 0$ and $\bold r\gg \bold 0$, we have
$$
\begin{array}{lll}
\vspace{0.3cm}
\ell \left(\frac{{\bf I}^{\bf r}{\overline{M}}^*_p}
{J^n{\bf I}^{\bf r}{\overline{M}}^*_p}\right )&=&\ell\left(\frac{{\bf I}^{\bf r}M^*_{n+p}+xM^*_{|{\bold r}|+n+p-1}}{J^n{\bf I}^{\bf r}M^*_p+xM^*_{n+|{\bold r}|+p-1}}\right)\\
\vspace{0.3cm}
&=&\ell\left(\frac{{\bf I}^{\bf r}M^*_{n+p}}{J^n{\bf I}^{\bf r}M^*_p+xM^*_{n+|{\bold r}|+p-1}\cap {\bf I}^{\bf r}M^*_{n+p}}\right)\\
\vspace{0.3cm}
&=&\ell\left(\frac{{\bf I}^{\bf r}M^*_{n+p}}{J^n{\bf I}^{\bf r}M^*_p+x{\bf I}^{{\bf r}-\delta(i)}M^*_{n+p}}\right)\\
\vspace{0.3cm}
&=&\ell\left(\frac{{\bf I}^{\bf r}M^*_{n+p}}{J^n{\bf I}^{\bf r}M^*_p}\right)-\ell\left(\frac{J^n{\bf I}^{\bf r}M^*_p+x{\bf I}^{{\bf r}-\delta(i)}M^*_{n+p}}{J^n{\bf I}^{\bf r}M^*_p}\right)\\
\vspace{0.3cm}
&=&\ell\left(\frac{{\bf I}^{\bf r}M^*_{n+p}}{J^n{\bf I}^{\bf r}M^*_p}\right)-\ell\left(\frac{x{\bf I}^{{\bf r}-\delta(i)}M^*_{n+p}}{J^n{\bf I}^{\bf r}M^*_p\cap xM^*_{n+p+|{\bold r}|}\cap {\bf I}^{\bf r}M^*_p}\right)\\
\vspace{0.3cm}
&=&\ell\left(\frac{{\bf I}^{\bf r}M^*_{n+p}}{J^n{\bf I}^{\bf r}M^*_p}\right)-\ell\left(\frac{x{\bf I}^{{\bf r}-\delta(i)}M^*_{n+p}}{xJ^n{\bf I}^{{\bf r}-\delta(i)}M^*_p}\right)
\end{array}
$$

Since $x\in I_i,\;i\geq 1,$ satisfies the condition $(FC_2)$ with
respect to {\break}$(J,I_1,...,I_q;M)$, it is a non-zero divisor of $M^*$
and hence we have an isomorphism of $R$-modules

$$\frac{x{\bf I}^{{\bf r}-\delta(i)}M^*_{n+p}}{xJ^n{\bf I}^{{\bf r}-\delta(i)}M^*_p}
\cong \frac{{\bf I}^{{\bf r}-\delta(i)}M^*_{n+p}}{J^n{\bf I}^{{\bf r}-\delta(i)}M^*_p}.$$ \noindent for all large $p,n, {\bold r}.$ So

$$\ell\left(\frac{x{\bf I}^{{\bf r}-\delta(i)}M^*_{n+p}}{xJ^n{\bf I}^{{\bf r}-\delta(i)}M^*_p}\right)=
\ell\left(\frac{{\bf I}^{{\bf r}-\delta(i)}M^*_{n+p}}{J^n{\bf I}^{{\bf r}-\delta(i)}M^*_p}\right),$$ \noindent for all $n,p$ and all large
$\bold r$.

\noindent Hence
\begin{equation}\label{equation17}\ell \left(\frac{{\bf I}^{\bf r}{\overline{M}}^*_{n+p}}
{J^n{\bf I}^{\bf r}{\overline{M}}^*_p}\right
)=\ell\left(\frac{{\bf I}^{\bf r}M^*_{n+p}}{J^n{\bf I}^{\bf
r}M^*_p}\right)-\ell\left(\frac{{\bf I}^{{\bf
r}-\delta(i)}M^*_{n+p}}{J^n{\bf I}^{{\bf
r}-\delta(i)}M^*_p}\right).\end{equation}

\noindent Now, by equation (\ref{equation1}),  we also have
$$\ell \left(\frac{{\bf I}^{\bf r}{\overline{M}}^*_{n+p}}
{J^n{\bf I}^{\bf r}{\overline{M}}^*_p}\right )=\ell \left(\frac{{\bf I}^{\bf r}{\overline{M}}_{n+p}}
{J^n{\bf I}^{\bf r}{\overline{M}}_p}\right )$$ \noindent for all large $n, p,{\bf r}$. Therefore
$$\ell \left(\frac{{\bf I}^{\bf r}{\overline{M}}_{n+p}} {J^n{\bf
I}^{\bf r}{\overline{M}}_p}\right )=\ell \left(\frac{{\bf I}^{\bf
r}M_{n+p}} {J^n{\bf I}^{\bf r}M_p}\right )-\ell \left(\frac{{\bf
I}^{{\bf r}-\delta(i)}M_{n+p}} {J^n{\bf I}^{{\bf
r}-\delta(i)}M_p}\right ),$$ \noindent for all large $p,{\bf r}$.

\noindent  Now if $x$ satisfies the condition $(FC_3)$, that means
$$\mbox{dim}(\mbox{Supp}(\overline{M}^*))={\mbox{dim}}(\mbox{Supp}(({M}^*)))-1$$
\noindent  then it can be verified that
$$\ell \left(\frac{{\bf I}^{\bf r}{\overline{M}}_{n+p}}
{J^n{\bf I}^{\bf r}{\overline{M}}_p}\right ).$$

\noindent is a polynomial of degree $D-1$ for all large $p,n,\bold r,$
and the terms of total degree $D-1$ in this polynomial

$$\ell \left(\frac{{\bf I}^{\bf r}{\overline{M}}_p}
{J^n{\bf I}^{\bf r}{\overline{M}}_p}\right ),$$

\noindent is equal to the terms of total degree $D-1$ in the
polynomial

$$\ell \left(\frac{{\bf I}^{\bf r}M_{n+p}}
{J^n{\bf I}^{\bf r}M_p}\right )-\ell \left(\frac{{\bf I}^{{\bf r}-\delta(i)}M_{n+p}}
{J^n{\bf I}^{{\bf r}-\delta(i)}M_p}\right ).$$

The above facts show that
$$e^j(J^{[k_0]},I_1^{[k_1]},\dots,I_q^{[k_q]};M)=e^j(J^{[k_0]},I_1^{[k_1]},\dots, I_i^{[k_i-1]},\dots,I_q^{[k_q]};\overline{M}).$$

We now prove $(ii)$. As ${\mathcal I}$ is not contained in
$\sqrt{\mbox{Ann}M}$ it follows by Proposition \ref{Obs1} that,
for any $i\geq 1$ such that $k_i>0,$ there exists an element $x\in
I_i$ which  is a weak-(FC)-element  with respect to
$(J,I_1,...,I_q;M)$. Now, by (\ref{equation17}) we have

$$\ell \left(\frac{{\bf I}^{\bf r}{\overline{M}}^*_{n+p}}
{J^n{\bf I}^{\bf r}{\overline{M}}^*_p}\right )=\ell\left(\frac{{\bf I}^{\bf r}M^*_{n+p}}{J^n{\bf I}^{\bf r}M^*_p}\right)-\ell\left(\frac{{\bf I}^{{\bf r}-\delta(i)}M^*_{n+p}}{J^n{\bf I}^{{\bf r}-\delta(i)}M^*_p}\right).$$

\noindent Since $e^j(J^{[k_0]},I_1^{[k_1]},\dots,I_q^{[k_q]};M)\neq 0,$ it follows that
$\ell \left(\frac{{\bf I}^{\bf r}{\overline{M}}^*_{n+p}}
{J^n{\bf I}^{\bf r}{\overline{M}}^*_p}\right )$
is a polynomial of degree $D-1$. Thus,
$$\dim(\mbox{Supp}({\overline{M}}^*))=\dim(\mbox{Supp}(M/xM:{\mathcal I}^{\infty}))=D-1=\dim(\mbox{Supp}((M^*))-1$$
and hence $x$ is an (FC)-element.
\end{proof}

\begin{prop}\label{prop3}
Let $x_1,\ldots, x_t$ be a weak-(FC)-sequence in $I_1\cup\dots\cup
I_q$ with respect to $(J, I_1,\dots, I_q;M)$. Then
$$\dim\left(\mbox{Supp} \left(M/(x_1,\ldots, x_t)M:\mathcal{I}^{\infty}\right)\right)\leq \dim\left(\mbox{Supp}
\left(M/0_{M}:\mathcal{I}^{\infty}\right)\right)-t$$ with equality
if and only if $x_1,\ldots, x_t$ is an (FC)-sequence of $G$ with
respect to $U.$
\end{prop}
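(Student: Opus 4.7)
The plan is to induct on $t$, treating the inequality and the equality criterion simultaneously. The base case $t = 1$ carries all the real content; the inductive step is then a clean reduction to the module $\overline{M} = M/x_1 M$.

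For the base case, with $x = x_1$ a weak-(FC)-element, I would first use condition $(FC_2)$, namely $0_M : x \subseteq N := 0_M : \mathcal{I}^{\infty}$, to show that $x$ acts as a non-zero-divisor on $M^* = M/N$: if $xa \in N$ then $\mathcal{I}^k xa = 0$ for some $k$, hence $\mathcal{I}^k a \subseteq 0_M : x \subseteq N$, and applying $\mathcal{I}$ once more gives $a \in N$. This yields $\dim(\mbox{Supp}(M^*/xM^*)) = \dim(\mbox{Supp}(M^*)) - 1$. Next I would observe that $xM + N \subseteq xM : \mathcal{I}^{\infty}$ (since $\mathcal{I}^k N = 0$), producing a natural surjection $M^*/xM^* = M/(xM + N) \to M/(xM : \mathcal{I}^{\infty}) = \overline{M}^*$. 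Therefore $\dim(\mbox{Supp}(\overline{M}^*)) \leq \dim(\mbox{Supp}(M^*)) - 1$, and equality is precisely the content of condition $(FC_3)$, i.e., the condition distinguishing an (FC)-element from a mere weak-(FC)-element.

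For the inductive step, I would appeal directly to the definition of a weak-(FC)-sequence: $\overline{x}_2, \ldots, \overline{x}_t$ is a weak-(FC)-sequence for $(\overline{I}_1, \ldots, \overline{I}_q; \overline{M})$ on $\overline{M} = M/x_1 M$. Using the identification $M/(x_1, \ldots, x_t)M : \mathcal{I}^{\infty} = \overline{M}/(x_2, \ldots, x_t)\overline{M} : \mathcal{I}^{\infty}$, the induction hypothesis bounds the left-hand dimension by $\dim(\mbox{Supp}(\overline{M}^*)) - (t - 1)$, with equality iff $\overline{x}_2, \ldots, \overline{x}_t$ is an (FC)-sequence on $\overline{M}$. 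Chaining this with the base case applied to $x_1$ on $M$ delivers the stated inequality, and equality in the chain forces both estimates to be tight, which by definition is equivalent to $x_1, \ldots, x_t$ being an (FC)-sequence with respect to $(J, I_1, \ldots, I_q; M)$.

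The main obstacle is really in the base case: one must juggle the two torsion-like submodules $0_M : x$ and $N = 0_M : \mathcal{I}^{\infty}$ carefully enough to simultaneously extract both the non-zero-divisor property of $x$ on $M^*$ and the surjection $M^*/xM^* \to \overline{M}^*$. Once these two ingredients are in hand the dimension inequality and the characterization of equality via $(FC_3)$ are immediate, and degenerate cases (where $M^*$ or a later quotient vanishes) are absorbed by the usual convention that the zero module has dimension $-\infty$.
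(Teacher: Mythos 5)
Your proof is correct, but the decisive base-case estimate is obtained by a genuinely different mechanism from the paper's. The paper never argues directly with supports at this point: it quotes the identity (\ref{equation17}) from the proof of Proposition \ref{prop2}, which exhibits $\ell\bigl({\bf I}^{\bf r}\overline{M}^*_{n+p}/J^n{\bf I}^{\bf r}\overline{M}^*_p\bigr)$ as the difference between the Hilbert-type polynomial of $M^*$ and its shift by $\delta(i)$, hence a polynomial of degree at most $D-1$, and then converts degree back into $\dim(\mbox{Supp}(\overline{M}^*))$ via Theorem \ref{prop1}; the induction on $t$ and the identification of the equality case with condition $(FC_3)$ are then exactly as in your argument. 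You instead prove the one-step drop by pure module theory: $(FC_2)$ forces $x$ to lie outside every associated prime of $M^*$, in particular outside every minimal prime of $\mbox{Supp}(M^*)$, so $\dim(\mbox{Supp}(M^*/xM^*))\leq \dim(\mbox{Supp}(M^*))-1$, and the containment $xM+N\subseteq xM:\mathcal{I}^{\infty}$ supplies the surjection $M^*/xM^*\to\overline{M}^*$. This is more elementary and more self-contained --- it uses only $(FC_2)$, not $(FC_1)$ and not the polynomial machinery of Theorem \ref{prop1} and (\ref{equation17}) --- whereas the paper's route costs nothing extra because those identities were already in place for Proposition \ref{prop2}. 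Two small things to tighten: in showing $x$ is a non-zero-divisor on $M^*$, ``applying $\mathcal{I}$ once more'' should read ``applying a bounded power of $\mathcal{I}$'' (by Noetherianity $N=0_M:\mathcal{I}^{k_0}$ for some fixed $k_0$, so $\mathcal{I}^{k_0+k}a=0$ and $a\in N$); and you only need the inequality $\dim(\mbox{Supp}(M^*/xM^*))\leq\dim(\mbox{Supp}(M^*))-1$, not the equality you assert, which is just as well since the reverse, Krull-type bound plays no role here and would require separate care in degenerate cases.
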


\begin{proof}By equality (\ref{equation17}) it follows that
$$\ell\left(\frac{{\bf I}^{\bf r}\overline{M}^*_{n+p}}
{J^{n}{\bf I}^{\bf r}\overline{M}^*_p}\right)$$ \noindent is a
polynomial of degree at most $D-1$ for all $n>z$ and $\bold
r>\bold v.$ Hence $\dim(\mbox{Supp}(M/xM:{\mathcal
I}^{\infty}))\leq\dim(\mbox{Supp}(M/0:{\mathcal I}^{\infty}))-1.$
From this fact and using induction on $t$ it follows that
$$\dim(\mbox{Supp}(M/(x_1,\ldots, x_t)M:{\mathcal I}^{\infty}))\leq\dim(\mbox{Supp}
(M/0:{\mathcal I}^{\infty}))-t$$ and $x_1,\ldots, x_t$ is an
(FC)-sequence in $I_1\cup\dots \cup I_q$ with respect to {\break}$(J,
I_1,\dots, I_q;M)$ if and only if
$$\dim(\mbox{Supp}(M/(x_1,\ldots, x_t)M:{\mathcal I}^{\infty}))=\dim(\mbox{Supp}
(M/0:{\mathcal I}^{\infty}))-t.$$
\end{proof}

The following theorem characterizes mixed multiplicities  by means
of Buchsbaum-Rim multiplicities and also determines the positivity
of mixed multiplicities.

\begin{thm}\label{Teo1}
 Keeping the setup $(1),$ assume that $D>0.$
Let $k_0,j,k_1,...,k_q$ be non-negative integers with sum equal to
$D.$  Then
\begin{itemize}
\item [(i)] $$e^j(J^{[k_0]},I_1^{[k_1]},\dots, I_q^{[k_q]}; M)=e^j_{BR}({J};\overline{M}_t^*),$$
\noindent for any (FC)-sequence $x_1,...,x_t,$ with respect to
$(J,I_1,\ldots,I_q; M),$ of $t=k_1+...+k_q$ elements  consisting
of $k_1$ elements of $I_1$,..., $k_q$ elements of $I_q,$ where
$\overline{M}_t^*=M/((x_1,...,x_t)M:{\mathcal{I}}^{\infty}).$

\item [(ii)] If $k_0>0,$ then $e^j(J^{[k_0]},I_1^{[k_1]},\dots, I_q^{[k_q]};M)\neq 0,$
if and only if there exists a (FC)-sequence, with respect to
$(J,I_1,\ldots,I_q; M),$ of $t=k_1+...+k_q$ elements consisting of
$k_1$ elements of $I_1$,..., $k_q$ elements of $I_q.$
\end{itemize}
\end{thm}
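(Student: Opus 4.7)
The plan is to prove (i) by induction on $t = k_1 + \cdots + k_q$, using Proposition \ref{prop2}(i) as the reduction step, and to deduce (ii) by combining (i) with Proposition \ref{prop2}(ii) and Lemma \ref{lema crucial}(i).

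For (i), the base case $t=0$ forces $k_i=0$ for all $i\geq 1$. The empty sequence gives $\overline{M}_0^* = M/(0_M : \mathcal{I}^\infty) = M^*$, and Lemma \ref{lema crucial}(i) directly yields
$$e^j(J^{[k_0]}, I_1^{[0]}, \ldots, I_q^{[0]}; M) = e^j_{BR}(J; M^*) = e^j_{BR}(J; \overline{M}_0^*).$$
For the inductive step, let $x_1$ be the first element of the given (FC)-sequence; by hypothesis $x_1 \in I_{i_1}$ for some $i_1$ with $k_{i_1} \geq 1$. Proposition \ref{prop2}(i) gives
$$e^j(J^{[k_0]}, \ldots, I_{i_1}^{[k_{i_1}]}, \ldots ; M) = e^j(J^{[k_0]}, \ldots, I_{i_1}^{[k_{i_1}-1]}, \ldots ; M/x_1 M).$$
By the very definition of (FC)-sequence, the images $\overline{x}_2, \ldots, \overline{x}_t$ form an (FC)-sequence with respect to $(\overline{I}_1, \ldots, \overline{I}_q; M/x_1 M)$ with correctly reduced counts. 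Applying the inductive hypothesis identifies the right-hand side with $e^j_{BR}(J; N^*)$ where $N^* = (M/x_1 M)/\bigl((x_2, \ldots, x_t)(M/x_1 M) : \mathcal{I}^\infty\bigr)$. The standard correspondence
$$(x_2, \ldots, x_t)(M/x_1 M) : \mathcal{I}^\infty = \bigl((x_1, \ldots, x_t)M : \mathcal{I}^\infty\bigr)/x_1 M$$
then yields $N^* = M/\bigl((x_1, \ldots, x_t)M : \mathcal{I}^\infty\bigr) = \overline{M}_t^*$, closing the induction.

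For (ii), assume $k_0 > 0$. In the direction ($\Leftarrow$), given the (FC)-sequence, part (i) gives $e^j(\ldots; M) = e^j_{BR}(J; \overline{M}_t^*)$. Proposition \ref{prop3} (exploiting the (FC) rather than merely weak-(FC) property) yields $\dim(\mbox{Supp}(\overline{M}_t^*)) = D - t = k_0 + j > 0$, so $\overline{M}_t^*$ is nonzero and $\mathcal{I} \not\subseteq \sqrt{\Ann\, \overline{M}_t}$. Lemma \ref{lema crucial}(i) applied to $\overline{M}_t$ (noting $(\overline{M}_t)^* = \overline{M}_t^*$) then gives $e^j_{BR}(J; \overline{M}_t^*) \neq 0$. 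In the direction ($\Rightarrow$), we construct the (FC)-sequence iteratively: while some $k_i > 0$ for $i \geq 1$ and the current mixed multiplicity is nonzero, Proposition \ref{prop2}(ii) produces an (FC)-element $x \in I_i$, and Proposition \ref{prop2}(i) shows the mixed multiplicity of the quotient $M/xM$ is still nonzero. The hypothesis $\mathcal{I} \not\subseteq \sqrt{\Ann\,(\,\cdot\,)}$ needed to invoke Proposition \ref{prop2}(ii) at the next step propagates automatically, since its failure would force the starred quotient to vanish and all of its mixed multiplicities with it, contradicting what we just derived. After $t$ steps the construction delivers the required (FC)-sequence.

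The main technical obstacle is the bookkeeping around the iterated colon ideal that identifies $N^*$ with $\overline{M}_t^*$ inside the induction, together with the verification that non-vanishing of the mixed multiplicity transfers to the successive quotients so that Proposition \ref{prop2}(ii) may be invoked repeatedly in the forward direction of (ii); both points are conceptually simple but require care.
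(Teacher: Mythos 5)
Your proposal is correct and follows essentially the same route as the paper: induction on $t=k_1+\cdots+k_q$ with Proposition \ref{prop2}(i) as the reduction step, Lemma \ref{lema crucial}(i) to convert to the Buchsbaum--Rim multiplicity, Proposition \ref{prop2}(ii) plus propagation of non-vanishing for the necessity in (ii), and the dimension count (your Proposition \ref{prop3}, implicitly $(FC_3)$ iterated in the paper) for the sufficiency. The only cosmetic difference is that you invoke Lemma \ref{lema crucial} at the base of the induction and carry the Buchsbaum--Rim identification through, whereas the paper first reduces to $e^j(J^{[k_0]},I_1^{[0]},\dots,I_q^{[0]};\overline{M_t})$ and converts at the end.
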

\begin{proof}  We prove first $(i).$ Let $x_1,...,x_t$ be an
(FC)-sequence with respect to $(J,I_1,\ldots,I_q; M),$ of
$t=k_1+...+k_q$ elements  consisting of $k_1$ elements of
$I_1$,..., $k_q$ elements of $I_q.$ We shall begin by showing that
\begin{equation}\label{equation10}
e^j(J^{[k_0]},I_1^{[k_1]},\dots, I_q^{[k_q]};M)=e^j(J^{[k_0]},I_1^{[0]},\dots, I_q^{[0]};\overline{M_t})\neq 0,
\end{equation}
\noindent where  $\overline{M_t}=M/(x_1,...,x_t)M.$  The proof is by
induction on $t=k_1+...+k_q$. \noindent For $t=0$, since ${\mathcal
I}$ is not contained in $\sqrt{\mbox{Ann}M}$, by Lemma \ref{lema
crucial}

$$e^j(J^{[k_0]},I_1^{[k_1]},\dots, I_q^{[k_q]};M)=e^j(J^{[k_0]},I_1^{[0]},\dots, I_q^{[0]};{M})\neq 0,$$

\noindent hence the result holds.

Now, assume that  $t>0.$ Then there
 exists $i$ $(1\leq i \leq q)$ such that $k_i> 0$ and $x_1\in I_i$. By Proposition \ref{prop2}, we get
$$e^j(J^{[k_0]},I_1^{[k_1]},\dots, I_q^{[k_q]};M)=e^j(J^{[k_0]},I_1^{[k_1]},\dots,I_i^{[k_i-1]},\dots, I_q^{[k_q]};\overline{M_1}),$$
\noindent where  $\overline{M_1}:=M/x_1M.$ Since $x_1,\dots, x_t$
is an (FC)-sequence with respect to $(J,I_1,\dots,I_q;M),$ it
follows that $x_2,\dots, x_t$ is an (FC)-sequence with respect to
$(J,I_1,\dots,I_q;M_1).$ But $k_1+\dots +(k_i-1)+\dots +k_q=t-1$,
then it follows by inductive assumption
$$e^j(J^{[k_0]},I_1^{[k_1]},\dots,I_i^{[k_i-1]},\dots, I_q^{[k_q]};\overline{M_1})=e^j(J^{[k_0]},I_1^{[0]},\dots, I_q^{[0]};\overline{M_t})\neq
0.$$ The induction is complete. We now turn to the proof of $(i)$.
Notice that since  $\overline{M_t}=M/(x_1,...,x_t)M$ we have that
\begin{equation}\label{equation7}\overline{M_t}^*=\overline{M_t}/(0_{\overline{M_t}}:{\mathcal{I}}^{\infty})=M/((x_1,...,x_t)M:{\mathcal{I}}^{\infty}).\end{equation}
 Since
$e^j(J^{[k_0]},I_1^{[0]},\dots, I_q^{[0]};\overline{M_t})\neq 0$, it follows that
$\mathcal I$ is not contained in
$\sqrt{\mbox{Ann}\overline{M_t}}$. Now, from Lemma \ref{lema crucial}
we have
$$e^j(J^{[k_0]},I_1^{[0]},\dots, I_q^{[0]};\overline{M_t})=
e_{BR}^j(J,{\overline{M_t}^*}).$$
\noindent Hence, by the  equality (\ref{equation10}),
$$e^j(J^{[k_0]},I_1^{[k_1]},\dots, I_q^{[k_q]};M)=e_{BR}^j(J,{\overline{M_t}^*}),$$ \noindent and the proof of
$(i)$ is complete.

We now prove $(ii).$ We prove first the necessity. The proof is by
induction on $t=k_1+...+k_q$. If $t=0$, the result is trivial. So,
assume that $t>0,$ that is some $k_i>0.$ Since by assumption
$e^j(J^{[k_0]},I_1^{[k_1]},\dots, I_q^{[k_q]};M)\neq 0,$ by
Proposition \ref{prop2}, there exists an element $x_1\in I_i$
which is an  (FC)-element with respect to $(J,I_1,\dots,I_q;M)$
 and $$e^j(J^{[k_0]},I_1^{[k_1]},\dots, I_q^{[k_q]};M)=e^j(J^{[k_0]},I_1^{[k_1]},\dots,I_i^{[k_i-1]},\dots, I_q^{[k_q]};\overline{M_1}),$$

\noindent where $\overline{M}:=M/x_1M.$ In particular, from this
equality, we have
$$e^j(J^{[k_0]},I_1^{[k_1]},\dots,I_i^{[k_i-1]},\dots,
I_q^{[k_q]};\overline{M_1})\neq 0.$$ \noindent But since
$k_1+\dots +(k_i-1)+\dots +k_q=t-1$,  it follows by inductive
assumption that we can choose
 $(t-1)$ elements $x_2,...,x_t,$ consisting of $k_1$ elements of $I_1$,...,$(k_i-1)$ elements
of $I_i$,..., and $k_q$ elements of $I_q,$  which form an
 (FC)-sequence with respect to
$(J,I_1,\dots,I_q,\overline{M})$. Since $x_1$ is an (FC)-element
with respect $(J,I_1,\dots,I_q;M)$, it follows that
$x_1,x_2,...,x_t$ is an (FC)-sequence with respect to
$(J,I_1,\dots,I_q;M).$

We now prove the sufficiency. Suppose that there exists an
(FC)-sequence $x_1,x_2,...,x_t,$ with respect to
$(J,I_1,\dots,I_q;M),$ of  $t=k_1+...+k_q$ elements consisting of
$k_1$ elements of $I_1$,...,$k_q$ elements of $I_q$. Then it
follows that
$$\mbox{dim}(\mbox{Supp}(M/((x_1,...,x_t)M:{\mathcal{I}}^{\infty}))=j+k_0>0.$$
\noindent Hence, it follows that $\mathcal{I}$ is not contained in
$\sqrt{\mbox{Ann}\overline{M_t}}$. Hence, by Lemma \ref{lema
crucial}, $e^j(J^{[k_0]},I_1^{[0]},\dots,
I_q^{[0]};{\overline{M_t}})=e^j(J;{\overline{M_t}}^*))\neq 0$. But
by $(i)$,
$$e^j(J^{[k_0]},I_1^{[k_1]},\dots, I_q^{[k_q]};M)=e^j(J;{\overline{M_t}}^*)),$$
Hence, the proof of the theorem is complete.
\end{proof}

In the case that ${\mbox ht}({\mathcal
I}+\mbox{Ann}M/{\mbox{Ann}M})>0$, we have that
$$D:=\dim(\mbox{\mbox{Supp}}(M^*))=\dim(\mbox{\mbox{Supp}}(M)).$$
Hence, it follows by Proposition \ref{prop1} and equation
(\ref{equation6}) that there exist positive integers $n_0$ and
${\bold u}$ such that the function
$$\ell\left(\frac{{\bold I}^{\bold r}M_{n+p}} {J^{n}{\bold I}^{\bold r}M_{p}}\right)$$ \noindent is, for $n>n_0,\bold r
>\bold u$ a polynomial of total degree $D=\dim(\mbox{\mbox{Supp}}(M)).$
We get the following theorem.

\begin{thm}\label{Teo4}
In the setup $(1),$ assume ${\mbox ht}({\mathcal
I}+\mbox{Ann}M/{\mbox{Ann}M})>0$. Let $k_0,j,k_1,...,k_q$ be
non-negative integers with sum equal to $D,$ with $k_0>0.$ Then,
if $t=k_1+...+k_q<{\mbox ht}({\mathcal
I}+\mbox{Ann}M/{\mbox{Ann}M})$ we have that
$$e^j(J^{[k_0]},I_1^{[k_1]},\dots, I_q^{[k_q]}; M)=e^j_{BR}(J;\overline{M_t}),$$
\noindent for any (FC)-sequence $x_1,...,x_t$   with respect to
$(J,I_1,\ldots,I_q; M)$ consisting of $k_1$ elements of
$I_1$,...,$k_q$ elements of $I_q,$ where
$\overline{M_t}=M/(x_1,...,x_t)M.$
\end{thm}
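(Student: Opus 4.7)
The idea is to reduce via Theorem~\ref{Teo1}(i) to a Buchsbaum--Rim multiplicity on $\overline{M_t}^{*}$, and then use the strict height hypothesis to replace $\overline{M_t}^{*}$ by $\overline{M_t}$ via the additivity of Buchsbaum--Rim multiplicities.

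Since $x_1,\dots,x_t$ is an (FC)-sequence consisting of $k_i$ elements of $I_i$ and since $k_0>0$ forces $D=k_0+j+k_1+\cdots+k_q>0$, Theorem~\ref{Teo1}(i) applies directly and gives
\[
e^j(J^{[k_0]},I_1^{[k_1]},\dots,I_q^{[k_q]};M)=e^j_{BR}(J;\overline{M_t}^{*}).
\]
So the theorem reduces to proving $e^j_{BR}(J;\overline{M_t})=e^j_{BR}(J;\overline{M_t}^{*})$. To get this I would consider the short exact sequence
\[
0 \longrightarrow 0_{\overline{M_t}}:\mathcal{I}^{\infty} \longrightarrow \overline{M_t} \longrightarrow \overline{M_t}^{*} \longrightarrow 0
\]
and invoke the additivity of associated Buchsbaum--Rim multiplicities \cite[Theorem~6.7a]{Kleiman-Thorup}; this produces the desired equality as soon as $\dim\mbox{Supp}(0_{\overline{M_t}}:\mathcal{I}^{\infty})<\dim\mbox{Supp}(\overline{M_t})$.

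The heart of the proof is this dimension inequality, and it is precisely here that the strict hypothesis $t<\mbox{ht}((\mathcal{I}+\mbox{Ann}M)/\mbox{Ann}M)$ enters. By Krull's principal ideal theorem, cutting $M$ by the $t$ elements $x_1,\dots,x_t$ drops dimension by at most $t$, so $\dim\mbox{Supp}(\overline{M_t})\geq D-t$. On the other hand, $0_{\overline{M_t}}:\mathcal{I}^{\infty}$ is supported on $V(\mathcal{I})\cap\mbox{Supp}(\overline{M_t})\subseteq V(\mathcal{I}+\mbox{Ann}M)$; the height assumption forces every prime minimal over $\mathcal{I}+\mbox{Ann}M$ to have height strictly greater than $t$ above a minimal prime of $\mbox{Ann}M$, which (by the same catenary bookkeeping already used in the proof of Lemma~\ref{lema crucial}(ii)) yields $\dim G/(\mathcal{I}+\mbox{Ann}M)<D-t$. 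Combining the two estimates gives $\dim\mbox{Supp}(0_{\overline{M_t}}:\mathcal{I}^{\infty})<D-t\leq\dim\mbox{Supp}(\overline{M_t})$, as required.

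The main obstacle is precisely this dimension drop $\dim G/(\mathcal{I}+\mbox{Ann}M)<D-t$: one must carefully interpret the height condition and invoke the standard dimension-theoretic facts that are implicit in the setup of Lemma~\ref{lema crucial}(ii). Once the dimension inequality is in hand, additivity of the Buchsbaum--Rim multiplicity closes the argument via
\[
e^j(J^{[k_0]},I_1^{[k_1]},\dots,I_q^{[k_q]};M)=e^j_{BR}(J;\overline{M_t}^{*})=e^j_{BR}(J;\overline{M_t}).
\]
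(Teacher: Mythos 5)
Your proof is correct and follows essentially the same route as the paper: reduce to $e^j_{BR}(J;\overline{M_t}^{*})$ via Theorem~\ref{Teo1}(i), then pass from $\overline{M_t}^{*}$ to $\overline{M_t}$ by additivity of the Buchsbaum--Rim multiplicity over the torsion exact sequence. The only difference is cosmetic: the paper dispatches the second step by asserting that $\mbox{ht}((\mathcal{I}+\mbox{Ann}\overline{M_t})/\mbox{Ann}\overline{M_t})>0$ ``clearly'' persists and citing Lemma~\ref{lema crucial}, whereas you inline that lemma's argument and make the dimension bookkeeping ($\dim\mbox{Supp}(0_{\overline{M_t}}:\mathcal{I}^{\infty})\leq\dim G/(\mathcal{I}+\mbox{Ann}M)<D-t\leq\dim\mbox{Supp}(\overline{M_t})$) explicit, which is where the strict hypothesis $t<\mbox{ht}((\mathcal{I}+\mbox{Ann}M)/\mbox{Ann}M)$ is genuinely used.
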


\begin{proof}
Set
$\overline{M_t}^*=\overline{M_t}/(0_{\overline{M_t}}:{\mathcal{I}}^{\infty}).$
Then by (\ref{equation7})
$$\overline{M_t}^*=M/((x_1,...,x_t)M:{\mathcal{I}}^{\infty}).$$ By
Theorem \ref{Teo1} (i),
$$e^j(J^{[k_0]},I_1^{[k_1]},\dots, I_q^{[k_q]}; M)=e^j_{BR}(J;\overline{M_t}^*).$$
Since clearly ${\mbox ht}({\mathcal
I}+\mbox{Ann}\overline{M_t}/{\mbox{Ann}\overline{M_t}})>0,$
applying Lemma \ref{lema crucial}, we have
\begin{equation}\label{CRU}
e^j_{BR}(J;\overline{M_t}^*)=e^j_{BR}(J;\overline{M_t}).
\end{equation}

\noindent Thus  $$e^j(J^{[k_0]},I_1^{[k_1]},\dots, I_q^{[k_q]};
M)=e^j_{BR}(J;\overline{M_t})$$ and the proof is complete.
\end{proof}

\subsection{A formula for mixed multiplicities}

We will derive from Theorem \ref{Teo4} a formula for
$e^j(J^{[k_0]},I_1^{[k_1]},\dots, I_q^{[k_q]}; M)$, where
$k_1+...+k_q<{\mbox ht}({\mathcal I}+\mbox{Ann}M/{\mbox{Ann}M})$.
This formula generalizes the main
 results on mixed multiplicities of finite
colength modules given by Kirby and Rees in \cite{Kirby-Rees1} and
the authors in \cite{Bedregal-Perez}.

\begin{prop}\label{Generalized} Let $k_1,\dots, k_q$ be  integers such that $t:=k_1+\cdots + k_q<{\mbox ht}({\mathcal
I}+\mbox{Ann}M/{\mbox{Ann}M})$. Let $y_1,\dots, y_{D-t},x_1,\dots,
x_{t}$ be an (FC)-sequence with respect to $(J,I_1,\ldots,I_q;
M)$, consisting of $D-t$ elements of $J$, $k_1$ elements of $I_1$,
\dots, $k_q$ elements of $I_q.$ Then
$$e^0(J^{[D-t]},I_1^{[k_1]},\dots, I_q^{[k_q]};M)=e_{BR}(y_1,\dots,y_{D-t},x_1,\dots, x_{t};M).$$
\end{prop}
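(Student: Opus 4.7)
The plan is to invoke Theorem~\ref{Teo4} to reduce the mixed multiplicity on the left to a Buchsbaum-Rim multiplicity involving only $J$, and then appeal to the classical finite-colength Rees-type theorem for modules from \cite{Kirby-Rees1} and \cite{Bedregal-Perez} to convert this into the Buchsbaum-Rim multiplicity of the full sequence $y_1,\ldots,y_{D-t},x_1,\ldots,x_t$.

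First, since $k_0=D-t>0$ and $t<\mbox{ht}(\mathcal{I}+\mbox{Ann}M/\mbox{Ann}M)$, the hypotheses of Theorem~\ref{Teo4} are met. Unwinding Definition~\ref{FC}, the tail $x_1,\ldots,x_t$ of the given (FC)-sequence can be shown to be itself an (FC)-sequence with respect to $(J,I_1,\ldots,I_q;M)$, consisting of $k_i$ elements of $I_i$ for each $i$. Theorem~\ref{Teo4} then produces
$$
e^0(J^{[D-t]},I_1^{[k_1]},\dots,I_q^{[k_q]};M)\;=\;e_{BR}(J;\overline{M_t}),
$$
where $\overline{M_t}=M/(x_1,\ldots,x_t)M$.

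Second, by Proposition~\ref{prop3} together with the height hypothesis one has $\dim \mbox{Supp}(\overline{M_t})=D-t$, so $J$ is of finite colength in $G_1$ and acts on a module whose support dimension equals precisely the number $D-t$ of remaining elements $y_i$. The initial segment $y_1,\ldots,y_{D-t}$ of the ambient (FC)-sequence induces a weak-(FC)-sequence of the correct length in $J$ with respect to $\overline{M_t}$, and therefore a joint reduction of $J$ relative to $\overline{M_t}$ in the sense of the corollary following Proposition~\ref{FC JoiRe}. The finite-colength version of the Buchsbaum-Rim / Rees theorem for modules from \cite{Kirby-Rees1} and \cite{Bedregal-Perez} then gives
$$
e_{BR}(J;\overline{M_t})\;=\;e_{BR}(y_1,\ldots,y_{D-t};\overline{M_t})\;=\;e_{BR}(y_1,\ldots,y_{D-t},x_1,\ldots,x_t;M),
$$
the second equality being the standard reduction-modulo-$(x_1,\ldots,x_t)M$ identity for Buchsbaum-Rim multiplicities. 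Concatenation with the previous display yields the proposition.

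The main technical obstacle is the first step: verifying that the tail $x_1,\ldots,x_t$ of the given (FC)-sequence is itself an (FC)-sequence with respect to the original $M$ (not merely with respect to $M/(y_1,\ldots,y_{D-t})M$), so that Theorem~\ref{Teo4} applies as stated. One either shows this preservation directly, exploiting that the $y_i$'s lie in $J$ and that conditions $(FC_1)$--$(FC_3)$ relative to the $I_i$-components are not disturbed by removing $J$-elements from the ordering (the strict height inequality being essential here), or one bypasses the difficulty by an induction on $t$ that strips off the $x_i$'s one at a time using Proposition~\ref{prop2}(i), reducing at the end to a pure $J$-situation handled by the finite-colength theory.
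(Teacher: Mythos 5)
Your overall route is the same as the paper's: apply Theorem~\ref{Teo4} to get $e^0(\ldots;M)=e_{BR}(J;\overline{M_t})$ with $\overline{M_t}=M/(x_1,\ldots,x_t)M$, use Proposition~\ref{FC JoiRe}(ii) to see that $y_1,\ldots,y_{D-t}$ generate a reduction of $J$ modulo $\overline{M_t}$, and then pass from $e_{BR}((y_1,\ldots,y_{D-t});\overline{M_t})$ to $e_{BR}((y_1,\ldots,y_{D-t},x_1,\ldots,x_t);M)$. You are also right to flag the ordering issue (the $x_i$ follow the $y_j$ in the given sequence, while Theorem~\ref{Teo4} wants $x_1,\ldots,x_t$ to be an (FC)-sequence for $(J,I_1,\ldots,I_q;M)$ itself); the paper applies Theorem~\ref{Teo4} without comment on this point, so you are not worse off there.

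The genuine gap is the final equality, which you dismiss as ``the standard reduction-modulo-$(x_1,\ldots,x_t)M$ identity for Buchsbaum--Rim multiplicities.'' In this generality that identity is not automatic: $e_{BR}((y);M/(x)M)=e_{BR}((x,y);M)$ can fail if the $x_j$ lie in associated primes of the wrong dimension, and proving it is precisely where the paper does its real work. After arranging by prime avoidance that each $y_j$ avoids the minimal primes of $\mbox{Ann}(\overline{M_t}/(y_1,\ldots,y_{j-1})\overline{M_t})$, and that each $x_j$ avoids every associated prime $\mathfrak p$ of $(x_1,\ldots,x_{j-1})$ with $\dim(M/\mathfrak pM)\geq D-j$ --- the latter justified by the strict inequality $D-j>D-\mbox{ht}(\mathcal I+\mbox{Ann}M/\mbox{Ann}M)\geq\dim(M/\mathcal IM)$ --- the paper computes both $e_{BR}((y_1,\ldots,y_{D-t});\overline{M_t})$ and $e_{BR}((x_1,\ldots,x_t,y_1,\ldots,y_{D-t});M)$ by the same length formula of \cite[Proposition 2.6]{Bedregal-Perez} and observes that the two expressions coincide. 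Thus the height hypothesis is used twice, not once: once to invoke Theorem~\ref{Teo4}, and once to make the concatenated sequence behave like a parameter-type sequence for $M$ so that this associativity of Buchsbaum--Rim multiplicities holds. You need to supply this prime-avoidance argument (or an equivalent superficiality argument) rather than cite the identity as standard.
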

\begin{proof} Since ${\mbox ht}({\mathcal
I}+\mbox{Ann}M/{\mbox{Ann}M})>0$, we have that
$D=\dim(\mbox{\mbox{Supp}}(M^*))=\dim(\mbox{\mbox{Supp}}(M)).$
From Theorem \ref{Teo4} we have that
$$e^0(J^{[D-t]},I_1^{[k_1]},\dots, I_q^{[k_q]};
M)=e_{BR}(J;\overline{M_t}),$$ where
$\overline{M_t}:=M/(x_1,\dots, x_{t})M.$

It is also clear by Proposition \ref{FC JoiRe},   item $(ii),$
that we have that $y_1,\dots,y_{D-t}$ generate a reduction of $J$
module $\overline{M_t}.$ Therefore,
$$e_{BR}(J;{M}_{t})=e_{BR}((y_1,\dots,y_{D-t});{M}_{t}).$$
Without restriction we may assume that $y_{j}$ is not contained in
any minimal prime ideal of $\mbox{Ann}(\overline{M_t}/(y_1,\dots,
y_{j-1}){ M}_{t}),\;j=1,\dots, D-t$. Then for all large $n,$
$$
\begin{array}{cc}
e_{BR}((y_1,\dots,y_{D-t});M_{t})=\ell\left( M_n/(x_1,\dots, x_{t},y_1,\dots,y_{D-t})M_{n-1}\right)-\\
\ell\left((x_1,\dots,
x_{t},y_1,\dots,y_{D-t-1})M_n:_{M_n}y_{D-t}/(x_1,\dots,
x_{t},y_1,\dots,y_{D-t-1})M_{n-1}\right)
\end{array}
$$
by \cite[Proposition 2.6]{Bedregal-Perez}. On the other hand, we
may also assume that for any $j=1,\dots, t,$ $x_j$ do not belong
to $\mathfrak p$, for any associated prime ideal $\mathfrak p$ of
$(x_1,\dots, x_{j-1})$ which do not contain ${\mathcal
I}+\mbox{Ann}M/{\mbox{Ann}M}$. If $\mathfrak p$ is an associated
prime ideal of $(x_1,\dots,x_{j-1})$ with $\dim(M/{\mathfrak
p}M)\geq D-j$, then $\mathfrak p$ do not contain ${\mathcal I}$
because $D-j> D-{\mbox ht}({\mathcal
I}+\mbox{Ann}M/{\mbox{Ann}M})\geq \dim(M/{\mathcal I}M),$ whence
$x_j$ do not belong to $\mathfrak p$. So we can apply
\cite[Proposition 2.6]{Bedregal-Perez} and obtain

$$
\begin{array}{cc}
e_{BR}((x_1,\dots, x_{t},y_1,\dots,y_{D-t}),M)=\ell\left( M_n/(x_1,\dots, x_{t},y_1,\dots,y_{D-t})M_{n-1}\right)-\\
\ell\left((x_1,\dots,
x_{t},y_1,\dots,y_{D-t-1})M_n:_{M_n}y_{D-t}/(x_1,\dots,
x_{t},y_1,\dots,y_{D-t-1})M_{n-1}\right).
\end{array}
$$
This implies  $$e_{BR}((y_1,\dots,y_{D-t});{
M}_{t})=e_{BR}((x_1,\dots, x_{t},y_1,\dots,y_{D-t}),M).$$ Summing
up all equations we get the conclusion.

\end{proof}

\begin{cor}\label{Cor-Generalized} Let $k_1,\dots, k_q$ be  integers such that $t:=k_1+\cdots + k_q<{\mbox ht}({\mathcal
I}+\mbox{Ann}M/{\mbox{Ann}M})$. Let $y_1,\dots, y_{D-t},x_1,\dots,
x_{t}$ be an (FC)-sequence with respect to $(G_1,J,I_1,\ldots,I_q;
M)$, consisting of $j$ elements of $G_1,$ $D-t-j$ elements of $J$,
$k_1$ elements of $I_1$, \dots, $k_q$ elements of $I_q.$ Then
$$e^j(J^{[D-t-j]},I_1^{[k_1]},\dots, I_q^{[k_q]};M)=e_{BR}(y_1,\dots,y_{D-t},x_1,\dots, x_{t};M).$$
\end{cor}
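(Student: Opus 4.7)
The plan is to reduce Corollary \ref{Cor-Generalized} to Proposition \ref{Generalized} by absorbing the $j$ elements of $G_1$ into a quotient module, and then applying the already-established case $j = 0$.

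The first step is to establish the reduction formula
$$e^j(J^{[D-t-j]}, I_1^{[k_1]},\ldots, I_q^{[k_q]}; M) = e^0(J^{[D-t-j]}, I_1^{[k_1]},\ldots, I_q^{[k_q]}; M/(y_1,\ldots,y_j)M),$$
by iterating $j$ times a one-element cut-down formula: if $y \in G_1$ is an (FC)-element with respect to $(G_1, J, I_1, \ldots, I_q; M)$, then modding out by $y$ lowers the $p$-degree of the Hilbert polynomial $h(n, p, {\bf r}) = \ell({\bf I}^{\bf r} M_{n+p}/J^n {\bf I}^{\bf r} M_p)$ by exactly one. This is the $G_1$-analog of Proposition \ref{prop2}(i); its proof mirrors the length manipulations there, the key point being that the (FC$_1$) condition for $y \in G_1$ (the first entry of the tuple) yields an intersection identity of the form ${\bf I}^{\bf r} M_q \cap y M_{q+|{\bf r}|-1} = y {\bf I}^{\bf r} M_{q-1}$, with no shift in ${\bf r}$ but a drop in the $M$-grading. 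Combined with the filter-regularity of $y$ on $M^{*}$ and the dimension-drop condition (FC$_3$), this gives
$$\ell\!\left({\bf I}^{\bf r}\overline{M}_{n+p}\big/J^n{\bf I}^{\bf r}\overline{M}_p\right) = \ell\!\left({\bf I}^{\bf r}M_{n+p}\big/J^n{\bf I}^{\bf r}M_p\right) - \ell\!\left({\bf I}^{\bf r}M_{n+p-1}\big/J^n{\bf I}^{\bf r}M_{p-1}\right)$$
for large $n, p, {\bf r}$, where $\overline{M} = M/yM$. The right-hand side is a discrete $p$-derivative of $h_M$, so comparing top-degree coefficients shows $e^{j-1}(\,\cdots\,;\overline{M}) = e^j(\,\cdots\,;M)$.

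In the second step, I would apply Proposition \ref{Generalized} to $M' := M/(y_1, \ldots, y_j)M$ with the truncated (FC)-sequence $y_{j+1}, \ldots, y_{D-t}, x_1, \ldots, x_t$ of length $D - j$. The dimension drop at each of the $j$ (FC)-steps ensures $\dim(\mbox{Supp}(M'/0_{M'}:{\mathcal I}^\infty)) = D - j$, and the hypothesis $t < \mbox{ht}({\mathcal I} + \mbox{Ann}M/\mbox{Ann}M)$ is inherited by $M'$; thus, setting $D' := D - j$, we have $t < \mbox{ht}({\mathcal I} + \mbox{Ann}M'/\mbox{Ann}M')$ and the surviving (FC)-sequence has exactly the right type, namely $D' - t = D - t - j$ elements of $J$ together with $k_i$ elements of each $I_i$. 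Proposition \ref{Generalized} then yields
$$e^0(J^{[D-t-j]}, I_1^{[k_1]}, \ldots, I_q^{[k_q]}; M') = e_{BR}(y_{j+1}, \ldots, y_{D-t}, x_1, \ldots, x_t; M').$$

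The final step lifts this Buchsbaum-Rim multiplicity from $M'$ back to $M$. By the Koszul-type length identity of \cite[Proposition 2.6]{Bedregal-Perez} used at the end of the proof of Proposition \ref{Generalized}, and because each $y_i$ avoids the associated primes of the relevant successive quotient of $M$, one obtains inductively
$$e_{BR}(y_{j+1}, \ldots, y_{D-t}, x_1, \ldots, x_t; M/(y_1, \ldots, y_j)M) = e_{BR}(y_1, \ldots, y_{D-t}, x_1, \ldots, x_t; M),$$
which closes the chain of equalities. The main obstacle is Step 1: adapting Proposition \ref{prop2}(i) to elements of the ambient submodule $G_1$, since the $G_1$-index behaves differently from an $I_i$-index (there is no ${\bf r} - \delta(i)$ shift), and one must verify carefully that the resulting length identity is truly a discrete derivative in the $p$-variable, so that modding out by a single $y \in G_1$ really passes from $e^j$ to $e^{j-1}$.
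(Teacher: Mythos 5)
Your proposal is correct in outline, but it takes a genuinely different route from the paper's. The paper disposes of the $j$ elements of $G_1$ by a purely formal re-parametrization: writing $p=u+v$ and using $G_1^uM_v=M_{u+v}$, it identifies $\ell\left({\bf I}^{\bf r}M_{n+u+v}/J^n{\bf I}^{\bf r}M_{u+v}\right)$ with the mixed-multiplicity polynomial of the enlarged family $(J,G_1,I_1,\ldots,I_q)$ in the variables $n,u,v,{\bf r}$, which gives $e^j(J^{[k_0]},I_1^{[k_1]},\ldots,I_q^{[k_q]};M)=e^s(G_1^{[t]},J^{[k_0]},I_1^{[k_1]},\ldots,I_q^{[k_q]};M)$ for all $s+t=j$, in particular $e^j(J^{[D-t-j]},\ldots;M)=e^0(G_1^{[j]},J^{[D-t-j]},\ldots;M)$; it then applies Proposition \ref{Generalized} once to this enlarged family, feeding in the entire sequence $y_1,\ldots,y_{D-t},x_1,\ldots,x_t$ in one go. You instead prove a new cut-down lemma for $G_1$-elements (the $p$-derivative analogue of Proposition \ref{prop2}(i)), iterate it $j$ times, apply Proposition \ref{Generalized} to the quotient $M'$, and lift $e_{BR}$ back to $M$. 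Your lemma does work: the $(FC_1)$ condition in the $G_1$-slot yields ${\bf I}^{\bf r}J^nM_q\cap yM_{q+n+|{\bf r}|-1}=y{\bf I}^{\bf r}J^nM_{q-1}$, the resulting length identity is indeed the discrete $p$-derivative, and comparing the degree-$(D-1)$ parts gives $e^{j-1}(\cdots;\overline M)=e^j(\cdots;M)$ exactly as you say; the final lifting is the same Koszul/associativity argument from \cite[Proposition 2.6]{Bedregal-Perez} already used inside Proposition \ref{Generalized}. What the paper's substitution trick buys is economy (no new lemma, no induction, no lifting step); what your route buys is that the height hypothesis is only ever invoked for the original ideal $\mathcal I$, whereas the paper's single application of Proposition \ref{Generalized} to the enlarged family tacitly requires control of $j+t$ against the height of the ideal generated by $G_1I_1\cdots I_q$, a point it leaves implicit. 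The one step you should not wave through is the claim that $t<\mbox{ht}(\mathcal I+\mbox{Ann}M'/\mbox{Ann}M')$ is ``inherited'' by $M'=M/(y_1,\ldots,y_j)M$: since $\mbox{Ann}M'$ strictly contains $\mbox{Ann}M$ this is not automatic and needs the same prime-avoidance argument the paper deploys in Theorem \ref{Teo4} and Proposition \ref{Generalized}.
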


\begin{proof}
We know that for $r_1,\ldots,r_q,n,p\gg 0$
$$\ell \left (\frac{{\bf I}^{\bf r}M_{n+p}}{J^n{\bf I}^{\bf r}M_p}\right )=
\sum_{j+k_0+|{\bf k}|=D}\frac{e^j(J^{[k_0]},I_1^{[k_1]}, \ldots,
I_k^{[k_q]};M)}{j!k_0!{\bf k}!}\;{\bf r}^{\bf
k}n^{k_0}p^j+\cdots$$ \noindent and, since $G_1^uM_v=M_{u+v},$ we
have
$$\ell \left (\frac{{\bf I}^{\bf r}M_{n+u+v}}{J^n{\bf I}^{\bf r}M_{u+v}}\right )=
\ell \left (\frac{{\bf I}^{\bf r}G_1^uM_{n+v}}{J^n{\bf I}^{\bf
r}G_1^uM_{v}}\right )$$ and hence
$$\ell \left (\frac{{\bf I}^{\bf r}M_{n+u+v}}{J^n{\bf I}^{\bf r}M_{u+v}}\right )=
\sum_{s+t+k_0+|{\bf k}|=D}\frac{e^s(G_1^t,J^{[k_0]},I_1^{[k_1]},
\ldots, I_k^{[k_q]};M)}{s!t!k_0!{\bf k}!}\;{\bf r}^{\bf
k}n^{k_0}u^tv^s+\cdots.$$

Now, by making $p=u+v,$ it follows that
$$e^j(J^{[k_0]},I_1^{[k_1]}, \ldots,
I_k^{[k_q]};M)=e^s(G_1^t,J^{[k_0]},I_1^{[k_1]}, \ldots,
I_k^{[k_q]};M),$$ for all non-negative integers $s,t$ such that
$s+t=j.$ In particular we have that
$$e^j(J^{[D-t-j]},I_1^{[k_1]},\dots, I_q^{[k_q]};M)=e^0(G_1^{[j]},J^{[D-t-j]},I_1^{[k_1]},\dots,
I_q^{[k_q]};M).$$ Hence the result follows by applying the
Proposition \ref{Generalized} to the right-hand side of the above
equality.
\end{proof}

\section{Superficial and (FC)-sequences}

Using different kind of sequences one can translate mixed
multiplicities of ideals into Hilbert-Samuel multiplicities. For
instance: In the case of ${\mathfrak m}$-primary ideals,
Risler-Teissier in \cite{Teissier} used superficial sequences and
Rees in \cite{Rees} used joint reductions; in the case of
arbitrary ideals, Vi\^et \cite{Viet8} used (FC)-sequences and
Trung-Verma in \cite{Trung-Verma1} used
$({\varepsilon}_1,\dots,{\varepsilon}_m)$-superficial sequences.
In \cite{Twoviet}, Dinh and Viet proved that every
$({\varepsilon}_1,\dots,{\varepsilon}_m)$-superficial sequences is
in fact an (FC)-sequence and then, using this fact, they proved
that the main result of \cite{Trung-Verma1} could be obtained as
an immediate consequence of \cite[Theorem 3.4]{Viet8}.

In the case of finite colength submodules of $G_1$ one can compute
mixed multiplicities through superficial sequences as in
\cite{Bedregal-Perez} or joint reductions, as in
\cite{Kirby-Rees1} or \cite{Bedregal-Perez}. Here, as a
consequence of Theorem
 \ref{Teo1}, we extend to arbitrary submodules of $G_1$ the
result of Trung-Verma in \cite{Trung-Verma1}. That is, we extend
to this context the main result of \cite{Twoviet}.

\begin{defn}
Set $T=\bigoplus_{{\bf r},p}\frac{{\bf I}^{\bf r}M_{p+q}}{{\bf
I}^{{\bf r}+{\bf 1}}M_p}$. Let $\varepsilon$ be an index with
$1\leq  \varepsilon \leq q$. An element $x\in G$ is an
$\varepsilon$-superficial element for $I_1,\dots, I_q$  with
respect to $M$ if $x\in I_{\varepsilon}$ and the image $x^*$ of
$x$ in $\frac{I_{\varepsilon}G_{p+q}}{I_1\cdots I_{\varepsilon
-1}I_{\varepsilon}^2I_{\varepsilon+1}\cdots I_qG_p}$  is a
filter-regular element in T, i.e., $(0 :_T x^*)_{|{\bf r}|+p} = 0$
for ${\bf r}\gg 0$ and all $p$. Let
${\varepsilon}_1,\dots,{\varepsilon}_m$ be a non-decreasing
sequence of indices with $1\leq {\varepsilon}_i\leq q$. A sequence
$x_1,\dots, x_m$ is an $({\varepsilon}_1,\dots,{\varepsilon}_m)$-
superficial sequence for $I_1,\dots, I_q$ with respect to $M$  if
for $i = 1, \dots ,m$, ${\overline x}_i$ is an
${\varepsilon}_i$-superficial element for ${\overline I}_1,\dots,
{\overline I}_q$ with respect to $\overline{M}$, where ${\overline
x}_i$, ${\overline I}_1,\dots, {\overline I}_q$ are the images of
$x_i$, $I_1,\dots, I_q$ in $G/(x_1\dots,x_{i-1})G$ and
$\overline{M}=M/(x_1\dots,x_{i-1})M$.
\end{defn}

The relation between
$({\varepsilon}_1,\dots,{\varepsilon}_m)$-superficial sequences
and weak-(FC)- sequences is given  by the following Proposition,
which extend to this context the main result of \cite{Twoviet}.

\begin{prop}\label{thm3}. Let $I_1,\dots,I_q$ be $R$-submodules of $G_1$. Let $x\in G$  be
an $\varepsilon$-superficial element for $I_1,\dots,I_q$. Then $x$ is a weak-(FC)-element with respect
to $(I_1,\dots,I_q;M)$.
\end{prop}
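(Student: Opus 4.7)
\begin{prf}
The plan is to verify conditions $(FC_1)$ and $(FC_2)$ of Definition \ref{FC} directly from the filter-regularity of $x^*$ in $T$. As is standard for superficial elements, we take $x\in I_{\varepsilon}\setminus \mathfrak{m}I_{\varepsilon}$; if not, we may replace $x$ by a lift modulo $\mathfrak{m}I_{\varepsilon}$ without affecting its image $x^*$ or the filter-regular property.

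First I would unpack the filter-regularity of $x^*$ in the multigraded module $T=\bigoplus_{{\bf r},p}{\bf I}^{\bf r}M_{p+q}/{\bf I}^{{\bf r}+{\bf 1}}M_p$. Since $x\in I_{\varepsilon}\subseteq G_1$, multiplication by $x$ induces maps $T_{({\bf r},p)}\longrightarrow T_{({\bf r}+\delta(\varepsilon),p)}$, and the vanishing of $(0:_{T}x^{*})$ in multidegrees $(|{\bf r}|+p)$ with ${\bf r}\gg {\bf 0}$ translates into the concrete statement: for all ${\bf r}$ with $r_{\varepsilon}$ sufficiently large and every $p$, if $y\in {\bf I}^{{\bf r}-\delta(\varepsilon)}M_{p-1}$ and $xy\in {\bf I}^{{\bf r}+{\bf 1}-\delta(\varepsilon)}M_{p-1}$, then $y\in {\bf I}^{{\bf r}}M_{p-1}$. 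Making this translation precise (paying attention to the degree shifts in the multigrading of $T$) is the main bookkeeping step.

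Next I would deduce $(FC_1)$. The inclusion ${\bf I}^{\bf r}M_p\cap xM_{|{\bf r}|+p-1}\supseteq x{\bf I}^{{\bf r}-\delta(\varepsilon)}M_p$ is immediate from $x\in I_{\varepsilon}$. For the reverse inclusion, suppose $xu\in {\bf I}^{\bf r}M_p$ with $u\in M_{|{\bf r}|+p-1}$. Iteratively using the concrete filter-regular statement above (together with a routine Artin--Rees reduction to position $u$ inside a suitable ${\bf I}^{\bf s}M$), I would climb from a crude containment up to the sharp one $u\in {\bf I}^{{\bf r}-\delta(\varepsilon)}M_p$, giving $xu\in x{\bf I}^{{\bf r}-\delta(\varepsilon)}M_p$ as required.

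For $(FC_2)$, suppose $u\in M$ satisfies $xu=0$. Then $xu=0\in {\bf I}^{\bf r}M_p$ for every ${\bf r}$ and $p$, so the argument for $(FC_1)$ forces $u$ to lie in ${\bf I}^{{\bf r}-\delta(\varepsilon)}M_{|u|-|{\bf r}|+1}$ for all sufficiently large ${\bf r}$. Equivalently, the module $(0:_T x^*)$, which is a finitely generated multigraded submodule of the Noetherian module $T$, has generators in bounded multidegree and vanishes for ${\bf r}\gg {\bf 0}$; hence it is annihilated by some power ${\mathcal I}^N$. Lifting this back to $M$ yields ${\mathcal I}^N(0:_M x)=0$, i.e.\ $0_M:x\subseteq 0_M:{\mathcal I}^{\infty}$, which is exactly $(FC_2)$.

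The main obstacle is the multigraded bookkeeping in Step 1: one must match the denominator appearing in the definition of an $\varepsilon$-superficial element with the shifts ${\bf r}\mapsto {\bf r}-\delta(\varepsilon)$ appearing in $(FC_1)$, and verify that the index ``$|{\bf r}|+p$'' in the filter-regular condition really provides the sharp containment needed. Once this translation is made carefully, both $(FC_1)$ and $(FC_2)$ follow by formal manipulations combined with Artin--Rees.
\end{prf}
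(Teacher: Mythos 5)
Your overall strategy coincides with the paper's: translate the filter-regularity of $x^*$ in $T$ into colon-capturing equalities of the form $({\bf I}^{{\bf r}+{\bf 1}+\delta(\varepsilon)}M_p:x)\cap {\bf I}^{{\bf r}+{\bf 1}-\delta(\varepsilon)}M_{p+1}={\bf I}^{{\bf r}+{\bf 1}}M_p$, iterate them, and combine with Artin--Rees. Your treatment of $(FC_1)$ is essentially the paper's argument (Artin--Rees to write $xu=xv$ with $v$ deep in the filtration, then climb $v$ using the colon equalities), and is acceptable modulo the index bookkeeping you yourself flag.

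The $(FC_2)$ step, however, has a genuine gap. From $xu=0\in{\bf I}^{\bf r}M_p$ you conclude that ``$u$ lies in ${\bf I}^{{\bf r}-\delta(\varepsilon)}M$ for all sufficiently large ${\bf r}$.'' This does not follow: the $(FC_1)$-type equality only yields $xu\in x{\bf I}^{{\bf r}-\delta(\varepsilon)}M_p$, i.e.\ $xu=xv$ for some $v$ in that submodule, and one cannot cancel $x$ precisely because $u$ may be a nonzero element of $0_M:x$. Indeed, if your claim were true, the graded Krull intersection would force $u=0$, i.e.\ $x$ would be a nonzerodivisor on $M$ --- strictly stronger than $(FC_2)$ and false in general. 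The subsequent patch (``$(0:_Tx^*)$ is annihilated by ${\mathcal I}^N$, lifting back gives ${\mathcal I}^N(0_M:x)=0$'') does not close the gap either: every statement extracted from $T$ concerns elements already lying in some ${\bf I}^{\bf r}M$, whereas a general $u\in 0_M:x$ need not lie in any such submodule, so there is no direct ``lift.'' The missing argument, which is how the paper proceeds, is to iterate the colon equalities to get $(0_M:x)\cap{\mathcal I}^{\bf n}M\subseteq\bigcap_{k}{\mathcal I}^{{\bf n}+k\delta(\varepsilon)}M=0$ for ${\bf n}\gg 0$, and then observe that ${\mathcal I}^{\bf n}(0_M:x)\subseteq(0_M:x)\cap{\mathcal I}^{\bf n}M=0$, which is exactly $0_M:x\subseteq 0_M:{\mathcal I}^{\infty}$. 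Without this last observation your argument does not reach $(FC_2)$.
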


\begin{proof}
Let $x$ be an $\varepsilon$-superficial element for $I_1,\dots,
I_q$ with respect to $M.$ Without loss of generality, we may
assume that $\varepsilon=1.$ Then

\begin{equation}\label{equation11}
({\bf I}^{{\bf r+1}+\delta(1)}M_p:x)\cap {\bf I}^{\bf
r}M_{p+q}={\bf I}^{\bf r+1}M_{p}
\end{equation}
\noindent for ${\bf r}\gg 0$ and for all $p.$ This equality
implies
\begin{equation}\label{equation12}
({\bf I}^{{\bf r+1}+\delta(1)}M_p:x)\cap {\bf I}^{{\bf
r+1}-\delta(1)}M_{p+1}={\bf I}^{\bf r+1}M_p
\end{equation}
\noindent for ${\bf r}\gg 0$ and for all $p.$ We prove by
induction on $k\geq 2$ that
\begin{equation}\label{equation13}
({\bf I}^{{\bf r+1}+(k-1)\delta(1)}M_p:x)\cap {\bf I}^{{\bf
r+1}-\delta(1)}M_{p+k-1}={\bf I}^{{\bf r+1}+(k-2)\delta(1)}M_{p}
\end{equation}
\noindent for ${\bf r}\gg 0$  and for all $p.$ The case $k=2$
follows from equality (\ref{equation12}). Assume now that
 $$({\bf I}^{{\bf r+1}+(k-1)\delta(1)}M_p:x)\cap {\bf I}^{{\bf r+1}-\delta(1)}M_{p+k-1}={\bf I}^{{\bf r+1}+(k-2)\delta(1)}M_{p}$$
 \noindent for ${\bf r}\gg 0$  and for all $p.$ Then
 $$
 \begin{array}{lll}
 \vspace{0.3cm}
 ({\bf I}^{{\bf r+1}+k\delta(1)}M_p:x)\cap {\bf I}^{{\bf r+1}-\delta(1)}M_{p+k}\!\!&\!\!=\!\!&({\bf I}^{{\bf r+1}+k\delta(1)}M_p:x)\cap\\
 \vspace{0.3cm}
 \!\!\!\!&&\!\!\!\!({\bf I}^{{\bf r+1}+(k-1)\delta(1)}M_{p+1}:x)\cap{\bf I}^{{\bf r+1}-\delta(1)}M_{p+k}\\
 \vspace{0.3cm}
 \!\!&\!\!=\!\!&({\bf I}^{{\bf r+1}+k\delta(1)}M_p:x)\cap {\bf I}^{{\bf r+1}+(k-2)\delta(1)}M_{p+1}\\
 \vspace{0.3cm}
 \!\!&\!\!=\!\!&{\bf I}^{{\bf r+1}+(k-1)\delta(1)}M_p.
 \end{array}
 $$
 \noindent for ${\bf r}\gg 0$  and for all $p.$ The last equality is gotten from equality (\ref{equation12}).
 Hence the induction is complete and we get equality (\ref{equation13}). Denote by ${\bf{\cal I}}^{\bf r}$
 the ideal in $G$ generated by ${\bf I}^{\bf r}.$ It follows that
 for ${\bf r}\gg 0.$

 $$
 \begin{array}{lll}
 \vspace{0.3cm}
 (0:x)\cap {\bf{\cal I}}^{{\bf r+1}-\delta(1)}M&=&(\bigcap_{k\geq 2}{\bf{\cal I}}^{{\bf r+1}+(k-1)\delta(1)}M:x)\cap {\bf{\cal I}}^{{\bf r+1}-\delta(1)}M\\
 \vspace{0.3cm}
  &=&\left(\bigcap_{k\geq 2}({\bf{\cal I}}^{{\bf r+1}+(k-1)\delta(1)}M:x)\right)\cap {\bf{\cal I}}^{{\bf r+1}-\delta(1)}M\\
  \vspace{0.3cm}
 &=&\bigcap_{k\geq 2}\left(({\bf{\cal I}}^{{\bf r+1}+(k-1)\delta(1)}M:x)\cap {\bf{\cal I}}^{{\bf r+1}-\delta(1)}M\right)\\
 \vspace{0.3cm}
 &=&\bigcap_{k\geq 2}{\bf{\cal I}}^{{\bf r+1}+(k-2)\delta(1)}M\\
 \vspace{0.3cm}
 &=&0,
 \end{array}
 $$
 \noindent that is, $(0_M:x)\cap {\bf{\cal J}}^{\bf n}M=0$ for ${\bf n}\gg 0.$ Here ${\bf{\cal J}}$ is the ideal in $G$ generated by $I_1\cdots I_q$.
 Hence $(0_M:x)\subseteq(0_M:{\bf{\cal J}}^{\infty}).$ Thus $x$  satisfies the condition $(FC_2)$.
 Now we need to prove that

 \begin{equation}\label{equation16}{\bf I}^{\bf r}M_p\cap xM_{|{\bf r}-\delta(1)|+p}=x{\bf I}^{{\bf
 r}-\delta(1)}M_p,\end{equation}
 \noindent for ${\bf r}\gg 0$ and for all $p.$

 From equality (\ref{equation11}) we get
 $${\bf{\cal I}}^{{\bf r+1}+\delta(1)}M\cap x {\bf{\cal I}}^{\bf v}M=x{\bf{\cal I}}^{\bf r+1}M,$$
 \noindent for ${\bf r}\gg 0$, where $v_i=r_i$ or $v_i=r_i+1$, $i=1,\dots, q.$  Using this formula we can easily show that

 $${\bf{\cal I}}^{{\bf r}}M\cap x {\bf{\cal I}}^{\bf v}M=x{\bf{\cal I}}^{\bf t}M,$$
 \noindent  for ${\bf r}\gg 0$, ${\bf t}\gg 0,$ where $t_i=\mbox{max}\{r_i,v_i\}$,$i=1,\dots, q$.

 By Artin-Rees Lemma, there exists $(c_1,\dots,c_q)$ with $c_1>0$ such that

 $${\bf{\cal I}}^{{\bf r}}M\cap xM\subset x{\bf{\cal I}}^{\bf r-c}M,$$
 \noindent for all ${\bf r}\geq {\bf c}$. Therefore

 $$
 \begin{array}{lll}
 {\bf{\cal I}}^{{\bf r}}M\cap xM&=&{\bf{\cal I}}^{{\bf r}}M\cap x{\bf{\cal I}}^{\bf r-c}M\\
 &=&x{\bf{\cal I}}^{{\bf r}-\delta(1)}M,
 \end{array}
 $$
 \noindent for ${\bf r}\gg 0.$ Equality (\ref{equation16}) now
 follows by concentrating in degree $|{\bf r}|+p$ in the last
 equality.
\end{proof}

The following theorem generalizes the main result of Trung-Verma
in \cite[Theorem 1.5]{Trung-Verma1} for modules.

\begin{thm}
Set $D = \dim\left({\mbox{Supp}}({M^*})\right)$. Let $k_0,
k_1,\dots, k_q$ be nonnegative integers such that $k_0 +
k_1+\cdots +k_q = D-1$. Assume that
${\varepsilon}_1,\dots,{\varepsilon}_m$ $(m = k_1+\cdots +k_q)$ is
a non-decreasing sequence of indices consisting of $k_1$ numbers
$1, \dots ,k_q$ numbers $q$. Let $Q$ be any ideal generated by an
$({\varepsilon}_1,\dots,{\varepsilon}_m)$-superficial sequence for
$J$, $I_1,\dots, I_q$. Then $e^0(J^{[k_0+1]}, I^{[k_1]}_1,\dots,
I^{[k_q]}_q;M) \neq 0$  if and only if
$\dim(\mbox{Supp}(M/(QM:{\mathcal I}^{\infty})))=k_0+1$. In this
case,
$$e^0(J^{[k_0+1]}, I^{[k_1]}_1,\dots,I^{[k_q]}_q;M)=e_{BR}(J;M/(QM:{\mathcal I}^{\infty})).$$
\end{thm}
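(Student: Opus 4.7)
The plan is to combine three earlier results: Proposition \ref{thm3} (every $\varepsilon$-superficial element is a weak-(FC)-element), Proposition \ref{prop3} (the dimension criterion distinguishing weak-(FC)-sequences from (FC)-sequences), and Theorem \ref{Teo1} (mixed multiplicities expressed as Buchsbaum-Rim multiplicities via (FC)-sequences). The ``easy'' direction of the equivalence is then essentially Proposition \ref{prop3} applied to the superficial sequence, while the converse requires a finite-difference argument using the key identity (\ref{equation17}) from the proof of Proposition \ref{prop2}, which I will observe in fact uses only the weak-(FC) conditions $(FC_1)$ and $(FC_2)$.

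By Proposition \ref{thm3}, the $(\varepsilon_1,\dots,\varepsilon_m)$-superficial sequence $x_1,\dots,x_m$ generating $Q$ is a weak-(FC)-sequence with respect to $(J,I_1,\dots,I_q;M)$ consisting of $k_i$ elements of $I_i$ for each $i$. Proposition \ref{prop3} then gives
$$\dim\bigl(\mbox{Supp}(M/(QM:{\mathcal I}^{\infty}))\bigr)\leq D-m=k_0+1,$$
with equality if and only if $x_1,\dots,x_m$ is actually an (FC)-sequence. Assuming this equality, Theorem \ref{Teo1}(i) applied with exponents $(k_0+1,0,k_1,\dots,k_q)$ (which sum to $D$) yields directly
$$e^0(J^{[k_0+1]},I_1^{[k_1]},\dots,I_q^{[k_q]};M)=e^0_{BR}\bigl(J;M/(QM:{\mathcal I}^{\infty})\bigr),$$
and non-vanishing then follows from Lemma \ref{lema crucial}(i) applied to $\overline{M}_m$, since $\dim(\mbox{Supp}(\overline{M}_m^*))=k_0+1>0$ guarantees $\mathcal{I}\not\subseteq\sqrt{\mbox{Ann}\,\overline{M}_m}$.

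For the converse direction, assume $e^0\neq 0$ and write $h^*(n,p,{\bf r})$ for the polynomial of degree $D$ produced by Theorem \ref{prop1}. The key observation is that equation (\ref{equation17}) in the proof of Proposition \ref{prop2} uses only conditions $(FC_1)$ and $(FC_2)$, and so applies verbatim to every weak-(FC)-element; iterating it along $x_1,\dots,x_m$ (replacing $M$ by $\overline{M}_{j-1}$ at the $j$-th step) yields
$$\ell\!\left(\frac{{\bf I}^{\bf r}(\overline{M}_m^*)_{n+p}}{J^n{\bf I}^{\bf r}(\overline{M}_m^*)_p}\right)=\Delta_{\varepsilon_1}\cdots\Delta_{\varepsilon_m}\,h^*(n,p,{\bf r}),$$
where $\Delta_i f({\bf r})=f({\bf r})-f({\bf r}-\delta(i))$. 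On the top-degree-$D$ piece of $h^*$ the iterated first-order differences act as the constant-coefficient operator $\partial_{r_1}^{k_1}\cdots\partial_{r_q}^{k_q}$ on the leading homogeneous part; in particular the monomial $\frac{e^0}{(k_0+1)!\,{\bf k}!}\,n^{k_0+1}{\bf r}^{\bf k}$ contributes the nonzero pure term $\frac{e^0}{(k_0+1)!}\,n^{k_0+1}$, while every monomial of lower total degree in $h^*$ drops to total degree $<k_0+1$ or vanishes. Hence the left-hand side is a polynomial of degree exactly $k_0+1$, and Theorem \ref{prop1} applied to $\overline{M}_m$ forces $\dim(\mbox{Supp}(\overline{M}_m^*))=k_0+1$, which combined with Proposition \ref{prop3} closes the equivalence and, together with the previous paragraph, produces the asserted formula.

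The main obstacle is the converse: carefully verifying that the iterated finite differences of $h^*$ preserve the $e^0$-coefficient as a nonzero leading $n^{k_0+1}$-term. This hinges on the elementary identity $\Delta_i^{k_i}r_i^{k_i}=k_i!$ together with the degree count ruling out contributions from monomials of lower total degree or from monomials whose $r_i$-exponent falls below $k_i$; it also requires that at each stage equation (\ref{equation17}) be re-applied to the pair $(\overline{M}_{j-1},x_j)$ rather than to the original $(M,x_1)$, which is legitimate because $x_1,\dots,x_m$ is a weak-(FC)-sequence.
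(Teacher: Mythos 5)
Your proposal is correct. The forward half --- Proposition \ref{thm3} to turn the superficial sequence into a weak-(FC)-sequence, Proposition \ref{prop3} for the bound $\dim(\mbox{Supp}(M/(QM:{\mathcal I}^{\infty})))\leq k_0+1$ with equality exactly when the sequence is (FC), then Theorem \ref{Teo1}(i) and Lemma \ref{lema crucial} for the formula and the non-vanishing --- is precisely the paper's argument. Where you genuinely add something is the converse. The paper closes it with the single sentence ``Therefore from Theorem \ref{Teo1} we have the result,'' but Theorem \ref{Teo1}(ii) is an \emph{existence} statement: when $e^0\neq 0$ it produces \emph{some} (FC)-sequence of the prescribed type, and does not by itself show that the \emph{given} weak-(FC)-sequence generating $Q$ is an (FC)-sequence, which is what Proposition \ref{prop3} requires in order to force the dimension to equal $k_0+1$. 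Your iterated finite-difference argument supplies exactly this step: you correctly observe that (\ref{equation17}) uses only $(FC_1)$ and $(FC_2)$ and hence applies to every weak-(FC)-element, and your degree bookkeeping is sound --- a leading-form monomial $n^ap^b{\bf r}^{{\bf k}'}$ survives $\Delta_1^{k_1}\cdots\Delta_q^{k_q}$ only if $k'_i\geq k_i$ for all $i$, it can contribute to the pure monomial $n^{k_0+1}$ only when $(a,b,{\bf k}')=(k_0+1,0,{\bf k})$, and lower-degree terms of $h^*$ drop to total degree at most $k_0$; so the coefficient of $n^{k_0+1}$ in the image is exactly $e^0/(k_0+1)!\neq 0$, forcing $\dim(\mbox{Supp}(M/(QM:{\mathcal I}^{\infty})))=k_0+1$. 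This is in substance the computation underlying Proposition \ref{prop2}(ii), applied inductively to the given sequence rather than to one chosen via Proposition \ref{Obs1}; your version is longer but makes explicit a point the paper leaves implicit.
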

\begin{proof} Assume that $Q=(x_1,\dots,x_m)$, where $x_1,\dots,x_m$ is an $({\varepsilon}_1,\dots,{\varepsilon}_m)$-superficial sequence for
$J$, $I_1,\dots, I_q$ with respect to $M.$ Thus, by Proposition
\ref{thm3}, $x_1,\dots,x_m$ is a weak-(FC)-sequence with respect
to $(J,I_1,\dots, I_q;M)$. Hence
$$\dim(\mbox{Supp}(M/(QM:{\mathcal I}^{\infty})))\leq D-m=k_0+1,$$
\noindent with equality if and only if $x_1,\dots,x_m$ is an
(FC)-sequence by Proposition \ref{prop3}. Therefore from Theorem
\ref{Teo1} we have the result.
\end{proof}

\section{Applications}

Let $(R, \mathfrak{m})$ be a Noetherian local ring and $E$ a
submodule of the free $R$-module $R^p.$ The symmetric algebra
$G:=\mbox{Sym}(R^p)=\oplus S_n(R^p)$ of $R^p$ is a polynomial ring
$R[T_1,\ldots ,T_p].$ If $h=(h_1,\ldots ,h_p)\in R^p,$ then we
define the element $w(h)=h_1T_1 + \ldots +h_pT_p\in
S_1(R^p)=:G_1.$ We denote by ${\cal R}(E):=\oplus {\cal R}_n(E)$
the subalgebra of $G$ generated in degree one by $\{w(h): h\in
E\}$ and call it the {\it  Rees algebra} of $E$.  Given any
finitely generated $R$-module $N$ consider the graded $G$-module
$M:=G\otimes_R N.$

We are now ready to introduce the main object of this paper, the
mixed multiplicities for a family of $R$-submodules of $R^p.$ Here
the linear submodules of $G_1$ of the previous sections will be
replaced by a module $E.$ Let $E_1,\ldots,E_q$ be $R$-submodules
of $R^p$ and denote by $I_i$ the $R$-submodule of $G_1$ given by
${\cal R}_1(E_i),$ for all $i=1,\dots, q.$ Let ${\mathcal I}$ be
the ideal of $G$ generated by $I_1\cdots I_q$. We translate into
this context the basic definitions and results of the previous
sections.

A sequence of elements $h_1,\ldots, h_q,$ with $h_i\in E_i,$ is an
{\it (FC)-sequence}  with respect to $(E_1,\dots,E_q; N)$ if for
all $i=1,\ldots,q,$ $h_i\in E_i$ are such that the sequence
$w(h_1),\ldots, w(h_k)$ is an (FC)-sequence in $G$ with respect to
$(I_1,...,I_q; M).$

Let $F$ be a finitely generated $R$-submodule of $R^p$ of finite
colength. Define the  function
$$h(n,p,\bold r):=\ell\left(\frac{{\cal R}_{r_1}(E_1)\cdots
{\cal R}_{r_q}(E_q)M_{n+p}} {{\cal R}_{n}(F){\cal
R}_{r_1}(E_1)\cdots {\cal R}_{r_q}(E_q)M_p}\right).$$ \noindent If
$n,p,\bold r\gg 0$ then $h(n,p,\bold r)$ becomes a polynomial of
total degree $D:=\dim((N/0_M:_N{\mathcal I}^{\infty}))+p-1$. If we
write the terms of total degree $\dim((N/0_M:_N{\mathcal
I}^{\infty}))+p-1$ of this polynomial in the form
$$B(n,p,{\bold r})=
\sum_{j+k_0+|{\bf k}|=D}\;\frac{1}{j!k_0!{\bf
k}!}e^j(F^{[k_0]},E_1^{[k_1]},\dots, E_q^{[k_q]};N)n^{k_0}p^j{\bf
r}^{\bf k}.$$ \noindent The coefficients
$e^j(F^{[k_0]},E_1^{[k_1]},\dots, E_q^{[k_q]};N)$ are called the
$j^{th}$-{\it mixed multiplicity of} $(F,E_1,\dots, E_q)$ with
respect to $N$ of the type $(k_0,k_1,\dots k_q).$

Theorem \ref{Teo1}  immediately gives the following result.

\begin{thm}\label{mod1}
 Let $F,E_1,...,E_q$ be as in the beginning of this section. Set
 $J={\cal R}_1(F).$
Let $k_0,j,k_1,...,k_q$ be non-negative integers with sum equal to
$D.$ Assume that $D>0.$ Then
\begin{itemize}
\item [(i)] $$e^j(F^{[k_0]},E_1^{[k_1]},\dots,E_q^{[k_q]};N)=e^j({J};\overline{M}^*),$$
\noindent for any (FC)-sequence $x_1,...,x_t$,  with respect to
$(F,E_1,...,E_q; M),$  of $t=k_1+...+k_q$ elements of $R^p$
consisting of $k_1$ elements of $E_1$,...,$k_q$ elements of $E_q,$
where
$\overline{M}^*=M/((w(x_1),...,w(x_t))M:{\mathcal{I}}^{\infty}).$

\item [(ii)] If $k_0>0,$ then $e^j(F^{[k_0]},E_1^{[k_1]},\dots,E_q^{[k_q]};N)\neq 0,$
if and only if there exist an (FC)-sequence with respect to
$(F,E_1,...,E_q; M),$ of $k_1+...+k_q$ elements   consisting of
$k_1$ elements of $E_1$,...,$k_q$ elements of $E_q.$
\end{itemize}
\end{thm}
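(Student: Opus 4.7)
The plan is essentially to translate the statement from the module-theoretic setting into the graded-algebra setting of Setup (1) and then invoke Theorem \ref{Teo1} directly, as the parenthetical remark ``Theorem \ref{Teo1} immediately gives the following result'' already suggests. First I would fix the dictionary: take $G=\mathrm{Sym}(R^p)=R[T_1,\ldots,T_p]$, set $I_i:=\mathcal{R}_1(E_i)\subseteq G_1$, $J:=\mathcal{R}_1(F)\subseteq G_1$, and $M:=G\otimes_R N$. Then $M$ is a finitely generated graded $G$-module with $M_n=G_n\otimes_R N$ and clearly $M_n=G_nM_0$, so it is generated in degree zero. Since $F$ has finite colength in $R^p$, the submodule $J$ has finite colength in $G_1$. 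Thus all the data of Setup $(1)$ is in place with $\mathcal{I}$ the ideal of $G$ generated by $I_1\cdots I_q$.

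The next step is to observe that the length function used to define the module-theoretic mixed multiplicities is literally the function $h(n,p,\bold r)$ of Theorem \ref{prop1}. Indeed, $\mathcal{R}_n(F)=J^n$ and $\mathcal{R}_{r_i}(E_i)=I_i^{r_i}$ as graded components of the Rees algebras inside $G$, so
\[
\ell\!\left(\tfrac{\mathcal{R}_{r_1}(E_1)\cdots\mathcal{R}_{r_q}(E_q)M_{n+p}}{\mathcal{R}_n(F)\mathcal{R}_{r_1}(E_1)\cdots\mathcal{R}_{r_q}(E_q)M_p}\right)
=\ell\!\left(\tfrac{{\bf I}^{\bf r}M_{n+p}}{J^n{\bf I}^{\bf r}M_p}\right).
\]
Comparing the coefficients of the total-degree-$D$ terms in the resulting polynomial gives
\[
e^j(F^{[k_0]},E_1^{[k_1]},\dots,E_q^{[k_q]};N)=e^j(J^{[k_0]},I_1^{[k_1]},\dots,I_q^{[k_q]};M).
\]
The only technical verification I need here is that the two notions of $D$ agree, that is, that $\dim\!\bigl(N/0_M:_N\mathcal{I}^{\infty}\bigr)+p-1=\dim(\mathrm{Supp}(M^*))$; this comes from the fact that $M=G\otimes_R N$ is essentially a polynomial extension, so $\mathrm{Supp}(M^*)\subseteq\mathrm{Proj}(G)$ has dimension equal to the $R$-dimension of the coefficient module plus $p-1$. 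I expect this to be the only mildly nontrivial point of the translation.

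With the dictionary set up, the two assertions are immediate. By Definition of (FC)-sequences in the module setting, a sequence $x_1,\ldots,x_t$ with $x_i\in E_{\nu(i)}$ is an (FC)-sequence with respect to $(F,E_1,\ldots,E_q;N)$ precisely when $w(x_1),\ldots,w(x_t)\in G_1$ is an (FC)-sequence with respect to $(J,I_1,\ldots,I_q;M)$. Applying Theorem \ref{Teo1}(i) to the right-hand side of the identification above yields
\[
e^j(J^{[k_0]},I_1^{[k_1]},\dots,I_q^{[k_q]};M)=e^j_{BR}(J;\overline{M}_t^*),
\]
where $\overline{M}_t^*=M/((w(x_1),\ldots,w(x_t))M:\mathcal{I}^{\infty})$, and this is exactly $\overline{M}^*$ in the statement, proving (i). Part (ii) follows in the same way by transporting Theorem \ref{Teo1}(ii) across the identification, since the existence of the required (FC)-sequence of module elements is equivalent to the existence of the corresponding (FC)-sequence of elements of $G_1$ of the prescribed type. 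The main ``obstacle'' is really bookkeeping: confirming the equality of the two definitions of $D$ and tracking that the definitions of (FC)-sequences in both languages correspond under the map $x\mapsto w(x)$; once these are in hand, nothing beyond Theorem \ref{Teo1} is needed.
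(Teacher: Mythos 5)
Your proposal is correct and follows exactly the route the paper takes: the paper offers no argument beyond the remark that Theorem \ref{Teo1} immediately gives the result, relying on precisely the dictionary $E_i\mapsto I_i={\cal R}_1(E_i)$, $F\mapsto J$, $N\mapsto M=G\otimes_R N$, $x\mapsto w(x)$ that you spell out. Your version is simply a more explicit write-up of the same translation-plus-invocation of Theorem \ref{Teo1}.
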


Theorem \ref{Teo4}  immediately gives the following result.
\begin{thm}\label{mod2}
Let $F,E_1,\ldots,E_q$ be as in the beginning of this section. Set
 $J={\cal R}_1(F).$
Assume ${\mbox ht}({\mathcal I}+\mbox{Ann}M/{\mbox{Ann}M})>0$. Let
$j,k_0,k_1,...,k_q$ be non-negative integers with sum equal to
$D,$ with $k_0>0.$ Then, if $t=k_1+...+k_q<{\mbox ht}({\mathcal
I}+\mbox{Ann}M/{\mbox{Ann}M})$ we have that
$$e^j(F^{[k_0]},E_1^{[k_1]},\dots, E_q^{[k_q]}; N)=e^j(J;\overline{M}),$$
\noindent for any (FC)-sequence $x_1,...,x_t$ with respect to
$(F,E_1,\ldots,E_q; M)$ consisting of $k_1$ elements of
$E_1$,...,$k_q$ elements of $E_q$ where
$\overline{M}=M/(w(x_1),...,w(x_t))M.$
\end{thm}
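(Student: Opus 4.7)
The plan is to reduce Theorem \ref{mod2} directly to Theorem \ref{Teo4} through the translation dictionary set up at the beginning of Section 5. First I would take $G=\mathrm{Sym}(R^p)$, identify $G_1\cong R^p$ via $h\mapsto w(h)$, set $M=G\otimes_R N$ (a finitely generated graded $G$-module generated in degree zero), and let $J:=\mathcal{R}_1(F)\subset G_1$, $I_i:=\mathcal{R}_1(E_i)\subset G_1$. Because $F$ has finite colength in $R^p$, $J$ has finite colength in $G_1$, so this data fits into Setup $(1)$. Since $\mathcal{R}_n(F)=J^n$ and $\mathcal{R}_{r_i}(E_i)=I_i^{r_i}$, the Hilbert function $h(n,p,\mathbf{r})$ defined at the start of Section 5 agrees term by term with the function $\ell\bigl(\mathbf{I}^{\mathbf{r}}M_{n+p}/J^n\mathbf{I}^{\mathbf{r}}M_p\bigr)$ from Section 4. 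Consequently the two sets of mixed multiplicities coincide,
$$e^j(F^{[k_0]},E_1^{[k_1]},\dots,E_q^{[k_q]};N)=e^j(J^{[k_0]},I_1^{[k_1]},\dots,I_q^{[k_q]};M),$$
and the integer $D$ is the same on both sides.

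Next, by the definition of (FC)-sequence stated in Section 5, a sequence $x_1,\dots,x_t\in\bigcup_i E_i$ is an (FC)-sequence with respect to $(F,E_1,\dots,E_q;M)$ if and only if $w(x_1),\dots,w(x_t)\in G_1$ is an (FC)-sequence with respect to $(J,I_1,\dots,I_q;M)$. Thus the hypothesis on the (FC)-sequence in Theorem \ref{mod2} translates verbatim to the hypothesis of Theorem \ref{Teo4}, while the assumptions $\mathrm{ht}(\mathcal{I}+\mathrm{Ann}M/\mathrm{Ann}M)>0$, $k_0>0$, and $t<\mathrm{ht}(\mathcal{I}+\mathrm{Ann}M/\mathrm{Ann}M)$ are exactly the hypotheses of Theorem \ref{Teo4}. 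Applying that theorem yields
$$e^j(J^{[k_0]},I_1^{[k_1]},\dots,I_q^{[k_q]};M)=e^j_{BR}\bigl(J;\overline{M}\bigr),$$
with $\overline{M}=M/(w(x_1),\dots,w(x_t))M$, and by the notational convention of Remark \ref{remark0} the right-hand side equals $e^j(J;\overline{M})$.

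No real obstacle is anticipated: the content of Theorem \ref{mod2} is exactly Theorem \ref{Teo4} re-expressed through the Rees-algebra dictionary for a submodule of a free module. The only bookkeeping is to verify the compatibility of the two definitions of mixed multiplicities and of (FC)-sequences, which is immediate from their respective formulations, and to observe that the height condition $\mathrm{ht}(\mathcal{I}+\mathrm{Ann}M/\mathrm{Ann}M)>0$ used in Theorem \ref{Teo4} is precisely the hypothesis imposed in Theorem \ref{mod2}.
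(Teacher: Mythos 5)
Your proposal is correct and is exactly the route the paper takes: the paper states that Theorem \ref{Teo4} immediately gives Theorem \ref{mod2} via the Rees-algebra dictionary $J=\mathcal{R}_1(F)$, $I_i=\mathcal{R}_1(E_i)$, $M=G\otimes_R N$, which is precisely the translation you spell out. Your write-up simply makes explicit the bookkeeping the authors leave implicit.
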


Corollary \ref{Cor-Generalized} immediately gives the following
result.

\begin{prop} Let $t=k_1+...+k_q<{\mbox ht}({\mathcal
I}+\mbox{Ann}M/{\mbox{Ann}M})$. Let $y_1,\dots,y_{D-t},x_1,\dots,
x_{t}$ be an (FC)-sequence with respect to $(R^p,F,E_1,\ldots,
E_q; N),$ consisting of
 $j$ elements of $R^p$, $D-t-j$ elements of $F,$ $k_1$ elements of $E_1$,...,$k_q$ elements
of $E_q.$ Then,
$$e^j(F^{[D-t-j]},E_1^{[k_1]},\dots, E_q^{[k_q]};
N)=e_{BR}(x_1,\dots, x_{t},y_1,\dots,y_{D-t}; N).$$
\end{prop}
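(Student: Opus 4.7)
The plan is to reduce this module-theoretic statement directly to Corollary \ref{Cor-Generalized} in the graded $R$-algebra setting of Setup $(1)$. Specifically, I would take $G = \mathrm{Sym}(R^p) = R[T_1,\dots,T_p]$, $J = \mathcal{R}_1(F)$, $I_i = \mathcal{R}_1(E_i)$ for $i=1,\dots,q$, and $M = G \otimes_R N$, which is the exact dictionary introduced at the beginning of Section 5. Note that $G_1 = \mathcal{R}_1(R^p)$, so elements of $R^p$ correspond via $w$ to elements of $G_1$, elements of $F$ correspond to elements of $J$, and elements of $E_i$ correspond to elements of $I_i$.

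First I would transport all hypotheses under this dictionary. By the definition of (FC)-sequences for submodules of $R^p$ given in Section 5, the hypothesis that $y_1,\dots,y_{D-t},x_1,\dots,x_t$ is an (FC)-sequence with respect to $(R^p,F,E_1,\dots,E_q;N)$ is precisely the statement that $w(y_1),\dots,w(y_{D-t}),w(x_1),\dots,w(x_t)$ is an (FC)-sequence in $G_1$ with respect to $(G_1,J,I_1,\dots,I_q;M)$, consisting of $j$ elements of $G_1$, $D-t-j$ elements of $J$, $k_1$ elements of $I_1$,\dots, $k_q$ elements of $I_q$. The integer $D$ defined on the module side agrees with $\dim(\mathrm{Supp}(M^*))$ by the analogous computation used to define the mixed multiplicities in Section 5, and the height hypothesis $t<\mathrm{ht}(\mathcal{I}+\mathrm{Ann}\,M/\mathrm{Ann}\,M)$ is stated identically in both settings.

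Next I would apply Corollary \ref{Cor-Generalized} in this graded setup to conclude that
$$e^j(J^{[D-t-j]},I_1^{[k_1]},\dots, I_q^{[k_q]};M) = e_{BR}\!\left(w(x_1),\dots,w(x_t),w(y_1),\dots,w(y_{D-t});M\right).$$
The left-hand side is by the defining formula of Section 5 equal to $e^j(F^{[D-t-j]},E_1^{[k_1]},\dots, E_q^{[k_q]};N)$, since the two defining Hilbert polynomials coincide (the length function $h(n,p,\mathbf r)$ of Section 5 is the length function of Setup $(1)$ for these choices of $G,J,I_i,M$).

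The remaining step, which is the only place that requires any care, is to identify the graded Buchsbaum-Rim multiplicity appearing on the right with the module Buchsbaum-Rim multiplicity $e_{BR}(x_1,\dots,x_t,y_1,\dots,y_{D-t};N)$. This is the content of Remark \ref{remark0} in the module context: letting $E\subseteq R^p$ denote the submodule generated by $x_1,\dots,x_t,y_1,\dots,y_{D-t}$, one has $\mathcal{R}_1(E) = R\text{-span of } w(x_i),w(y_\ell)$, and the Hilbert polynomial of $\ell(M_{n+q}/\mathcal{R}_n(E)M_q)$ is the same as the Hilbert polynomial $\ell(S_{n+q}(R^p)\otimes N/\mathcal{R}_n(E)\otimes S_q(R^p)\otimes N)$ used in \cite{Kirby-Rees1,Bedregal-Perez} to define the Buchsbaum-Rim multiplicity of $E$ with respect to $N$. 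The fact that $E$ has finite colength in $R^p$ (so that this multiplicity is finite) follows from the (FC)-sequence hypothesis combined with Proposition \ref{FC JoiRe}(ii), since the chosen sequence generates a generalized joint reduction and therefore, in the present $G_1$-of-finite-colength situation, a reduction of $G_1$ with respect to $M$. The main (mild) obstacle is just bookkeeping this last identification correctly; once done, the asserted equality follows.
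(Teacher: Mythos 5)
Your proposal is correct and follows exactly the route the paper intends: the paper gives no separate argument here, simply asserting that the statement is an immediate consequence of Corollary \ref{Cor-Generalized} under the Section 5 dictionary $G=\mathrm{Sym}(R^p)$, $J=\mathcal{R}_1(F)$, $I_i=\mathcal{R}_1(E_i)$, $M=G\otimes_R N$. Your write-up is just a more careful spelling-out of that same reduction, including the translation of the (FC)-hypothesis and the identification of the two Buchsbaum--Rim multiplicities, which the paper leaves implicit.
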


From these facts, we have a  similar result to that of Kirby and
Rees in \cite{Kirby-Rees1} and the authors in
\cite{Bedregal-Perez}, but in terms of (FC)-sequences.
\begin{thm}\label{mod3}
Suppose   that  $E_1,\ldots,E_q$ are $R$-submodules of $R^p$ of
finite colength. Let $j,k_1,\ldots,k_q\in \N$ with $j+|{\bf
k}|=d+p-1,$ where $\dim N=\dim R=d.$ Let $x_1,\ldots,x_{d+p-1}$ be
a a weak-(FC)-sequence with respect to $(R^p,E_1,\dots, E_q;N)$
consisting of
 $j$ elements of $R^p$, $k_1$ elements of $E_1$,...,$k_q$ elements
of $E_q.$ Then,
$$e^j(E_1^{[k_1]}, \ldots, E_q^{[k_q]},N)=e_{BR}((x_1,\ldots,x_{d+p-1}),N).$$
\end{thm}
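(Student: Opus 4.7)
The plan is to reduce Theorem \ref{mod3} to the module-theoretic proposition that immediately precedes it (the analogue for modules of Corollary \ref{Cor-Generalized}). First I would translate into the graded $R$-algebra setup of Section \ref{Section} by setting $G=\mbox{Sym}(R^p)=R[T_1,\dots,T_p]$, $I_i=\mathcal{R}_1(E_i)$, and $M=G\otimes_R N$, placing us in Setup $(1)$ with $\mbox{Ann}_G M=(\mbox{Ann}_R N)G$. The finite-colength hypothesis on each $E_i$ translates to $I_i$ having finite colength in $G_1$, so the ideal $\mathcal{I}=I_1\cdots I_q$ is $(\mathfrak{m}G+G_+)$-primary modulo $\mbox{Ann}M$; consequently
\[
\mbox{ht}\bigl(\mathcal{I}+\mbox{Ann}M/\mbox{Ann}M\bigr)=\dim(G/\mbox{Ann}M)=d+p,
\]
and $D:=\dim(\mbox{Supp}(M^*))=d+p-1$, so the height hypothesis $t<\mbox{ht}(\mathcal{I}+\mbox{Ann}M/\mbox{Ann}M)$ needed by the preceding proposition is automatic whenever $t\le d+p-1$.

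Second, the numerical identity $j+|\mathbf{k}|=d+p-1=D$ is precisely what makes the preceding proposition apply with \emph{no} contribution from an auxiliary module $F$: setting $t:=|\mathbf{k}|$ one gets $D-t-j=0$, so that proposition (provided the sequence is an (FC)-sequence) yields
\[
e^j(E_1^{[k_1]},\dots,E_q^{[k_q]};N)=e_{BR}\bigl(w(x_1),\dots,w(x_{d+p-1});M\bigr)=e_{BR}\bigl((x_1,\dots,x_{d+p-1});N\bigr),
\]
the last equality being the standard identification between the Buchsbaum-Rim multiplicity computed on $M=G\otimes_R N$ and that computed on $N$ itself, already used in \cite{Bedregal-Perez}. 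This is the target formula.

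The main obstacle will be that the preceding proposition demands an honest (FC)-sequence while our hypothesis furnishes only a weak-(FC)-sequence. I would close this gap via Proposition \ref{prop3}: a weak-(FC)-sequence of length $d+p-1=D$ forces
\[
\dim\bigl(\mbox{Supp}(M/(w(x_1),\dots,w(x_{d+p-1}))M:\mathcal{I}^\infty)\bigr)\le 0,
\]
with equality iff it is an (FC)-sequence. To obtain equality I would exploit the finite-colength hypothesis: since $\mathcal{I}$ is $(\mathfrak{m}G+G_+)$-primary modulo $\mbox{Ann}M$ and the filter-regularity condition $(FC_2)$ makes each $w(x_i)$ a non-zero divisor on the successive saturations $M^*,\overline{M}^*,\dots$, the dimension of $\mbox{Supp}(M^*)$ genuinely drops by one at each stage, yielding a nonzero $0$-dimensional support at the end. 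Hence the weak-(FC)-sequence is in fact an (FC)-sequence, and the preceding proposition applies verbatim. The most delicate point in the argument is this last inductive dimension-drop, which rests critically on the finite-colength assumption to rule out premature annihilation of the saturated quotient.
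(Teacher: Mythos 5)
Your overall strategy (specialize the module version of Corollary \ref{Cor-Generalized} to the case $D-t-j=0$) is the natural reading of the paper's terse ``from these facts,'' but two of your bridging steps contain genuine errors. First, the height computation is wrong. The ideal $\mathcal I$ is generated by $I_1\cdots I_q\subseteq G_q$, hence $\mathcal I\subseteq G_+$; a minimal prime over $G_+ + \mbox{Ann}M$ in $G/\mbox{Ann}M\cong (R/\mbox{Ann}N)[T_1,\dots,T_p]$ has height at most $p$ by Krull's height theorem, and it contains $\mathcal I$. So $\mbox{ht}(\mathcal I+\mbox{Ann}M/\mbox{Ann}M)\le p$ (and it is $\le d$ as well whenever the $E_i$ lie in $\mathfrak m R^p$); it is never $d+p$ when $d\ge 1$, and $\sqrt{\mathcal I}$ is not $\mathfrak m G+G_+$ (its degree-zero part is only the nilradical of $R$). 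Consequently the hypothesis $t<\mbox{ht}(\mathcal I+\mbox{Ann}M/\mbox{Ann}M)$ of the proposition you invoke is \emph{not} automatic for $t\le d+p-1$; it already fails for $t\ge p$, while Theorem \ref{mod3} permits $t=k_1+\cdots+k_q$ as large as $d+p-1$. Your reduction therefore covers only a proper subrange of the theorem.

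Second, the upgrade from weak-(FC) to (FC) is not established by your argument. Condition $(FC_2)$ does make $x$ a nonzerodivisor on $M^*$, which gives $\dim(M^*/xM^*)=\dim(M^*)-1$; but $(FC_3)$ concerns $\dim\mbox{Supp}(M/xM:\mathcal I^{\infty})$, which is a further quotient of $M^*/xM^*$, and the saturation can strictly lower the dimension if every top-dimensional component of $\mbox{Supp}(M/xM)$ lies in $V(\mathcal I)$. Since $V(\mathcal I)\supseteq V(G_+)$, and the supports $\mbox{Supp}(M/(x_1,\dots,x_s)M)$ always contain the $d$-dimensional zero-section $V(G_+)\cap\mbox{Supp}(M)$, this is a real threat precisely in the late stages $s\ge p$ of your induction; ``finite colength rules out premature annihilation'' is an assertion, not a proof, and it is exactly the content that Proposition \ref{prop3} and Theorem \ref{Teo1}(ii) are designed to control. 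The paper's own (implicit) route avoids both problems: by the Corollary to Proposition \ref{FC JoiRe}, a weak-(FC)-sequence of full length $d+p-1$ in finite-colength submodules is a joint reduction, and one then quotes the Rees-type mixed multiplicity theorem of \cite{Kirby-Rees1} and \cite{Bedregal-Perez} (or reruns the argument of Proposition \ref{Generalized} using that joint reduction) rather than forcing the sequence to satisfy $(FC_3)$. You should either take that route or supply a genuine proof of the dimension statement you need.
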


\end{document}